\newtheorem{theorem}{Theorem}[section]
\newtheorem{lemma}[theorem]{Lemma}
\newtheorem{corollary}[theorem]{Corollary}
\numberwithin{equation}{section}
\newcommand{\bbR}{\mathbb{R}}
\newcommand{\bbC}{\mathbb{C}}
\newcommand{\bbD}{\mathbb{D}}
\newcommand{\bbN}{\mathbb{N}}
\newcommand{\eitheta}{e^{i\theta}}
\newcommand{\mcc}{\mathcal{C}}
\newcommand{\mccn}{\mathcal{C}^{(n)}}
\newcommand{\mccm}{\mathcal{C}^{(M)}}
\newcommand{\mcln}{\mathcal{L}^{(n)}}
\newcommand{\mcmn}{\mathcal{M}^{(n)}}
\newcommand{\Real}{\textrm{Re}}
\newcommand{\thalph}{\theta_{\alpha}}
\newcommand{\thalphm}{\theta_{\alpha_M}}
\newcommand{\eithalph}{e^{i\theta_{\alpha}}}
\newcommand{\upp}{(n)}
\newcommand{\gap}{(-\thalph,\thalph)}
\newcommand{\nri}{n\rightarrow\infty}
\begin{document}

\title[ ] {Zeros of non-Baxter Paraorthogonal Polynomials on the Unit Circle}

\bibliographystyle{plain}

\thanks{  }

\maketitle

\begin{center}
\textbf{Brian Simanek\footnote{This research was partially supported by an NSF GRFP grant.}}\\
\textit{\small{California Institute of Technology\\
MC 253-37, 1200 E. California Blvd. Pasadena, CA 91125.}}\\
\small{bsimanek@caltech.edu}
\end{center}

\begin{abstract}
We provide leading order asymptotics for the size of the gap in the zeros around $1$ of paraothogonal polynomials on the unit circle whose Verblunsky coefficients satisfy a slow decay condition and are inside the interval $(-1,0)$.  We also include related results that impose less restrictive conditions on the Verblunsky coefficients.
\end{abstract}

\vspace{4mm}

\footnotesize\noindent\textbf{Keywords:} Zeros of paraorthogonal polynomials, Slow decay of Verblunsky coefficients, CMV matrix, Blaschke products, Approximate eigenvectors

\vspace{2mm}

\noindent\textbf{2010 Mathematics Subject Classification:  Primary}: 42C05, 26C10, \textbf{Secondary}: 47B36

\vspace{2mm}

\normalsize

\section{Introduction}\label{intro}

In this paper, we will let $\mu$ be a probability measure on the unit circle $\partial\bbD$ with infinite support.  In the Hilbert Space $L^2(\partial\bbD,d\mu)$, the set $\{1,z,z^2,z^3,\ldots\}$ is a linearly independent set so by applying Gram-Schmidt orthogonalization, we can obtain the orthonormal polynomials $\{\varphi_n(z)\}_{n\geq0}$ where $\varphi_n(z)$ is a polynomial of degree exactly $n$ with positive leading coefficient and we have the relation
\[
\langle\varphi_n,\varphi_m\rangle:= \int_{\partial\bbD}\overline{\varphi_n(z)}\varphi_m(z)d\mu(z)=\delta_{mn}.
\]

If we consider instead the monic orthogonal polynomials $\{\Phi_n(z)\}_{n\geq0}$, then it is well known (see Section 1.5 in \cite{OPUC1}) that these polynomials satisfy the recursion relation
\[
\Phi_{n+1}(z)=z\Phi_n(z)-\overline{\alpha}_n\Phi_n^*(z)
\]
where
\[
\alpha_n\in\bbD\quad,\quad\Phi_n^*(z)=z^n\overline{\Phi_n(1/\bar{z})}.
\]
The coefficients $\{\alpha_n\}_{n\geq0}$ are called the \textit{Verblunsky coefficients} and the recursion is called the \textit{Szeg\H{o} recursion} (see \cite{OPUC1} for further information).

It is often both interesting and insightful to study the relationship between properties of the measure and properties of the corresponding sequence $\{\alpha_n\}_{n\geq0}$.  For example, if we write
\begin{eqnarray*}\label{decomp}
d\mu(\eitheta)=w(\theta)\frac{d\theta}{2\pi}+d\mu_s(\eitheta)
\end{eqnarray*}
with $\mu_s$ singular with respect to Lebesgue measure,
and $\alpha_n\equiv\alpha\in\{z:|z+\frac{1}{2}|<\frac{1}{2}\}$ then $d\mu_s=0$ and $w(\theta)=0$ for $\theta\in[-2\arcsin|\alpha|,2\arcsin|\alpha|]$ (see Theorem 1.6.13 in \cite{OPUC1}).  We will refer to this result again later.

One useful tool for studying the orthogonal polynomials of a measure $\mu$ is the CMV matrix, which is a semi-infinite unitary matrix whose spectral measure is $\mu$ and is such that the characteristic polynomial of the upper-left $n\times n$ block is exactly $\Phi_n(z)$ (see Chapter 4 in \cite{OPUC1}).  However, since the upper-left $n\times n$ block is not unitary, one often introduces the so-called \textit{paraorthogonal polynomials} $\{\Phi_n^{(\beta_n)}(z)\}_{n\geq1}$ defined by
\begin{eqnarray*}
\Phi_{n+1}^{(\beta_n)}(z)=z\Phi_n(z)-\overline{\beta}_n\Phi_n^*(z)
\end{eqnarray*}
where $\beta_n\in\partial\bbD$.  These polynomials and their zeros have been studied extensively in recent years (see for example \cite{KStoi,FineIV,FineI,RankOne,Stoiciu,Lilian} and references therein).

It is well known (see \cite{Lilian}) that all of the zeros of $\Phi_n^{(\beta_n)}(z)$ are simple and lie on the unit circle so we can denote them by $\{\zeta_1^{(n)},\ldots,\zeta_n^{(n)}\}$ ordered counterclockwise starting from $1$.  Our investigation is motivated by a picture appearing in Chapter 8 in \cite{OPUC1} and also in \cite{Beware}.  It is also closely related to Conjecture D in Section 4 of \cite{FineI}.  In \cite{FineI}, the author points out that if the Verblunsky coefficients have a slow decay property (we will make this more precise later), then the zeros of the orthogonal polynomials appear to exhibit clock spacing on the unit circle away from $\theta=0$, while there is a large gap around $\theta=0$.
\begin{figure}[h!]\label{zeros}
\begin{center}
\includegraphics[scale=.5]{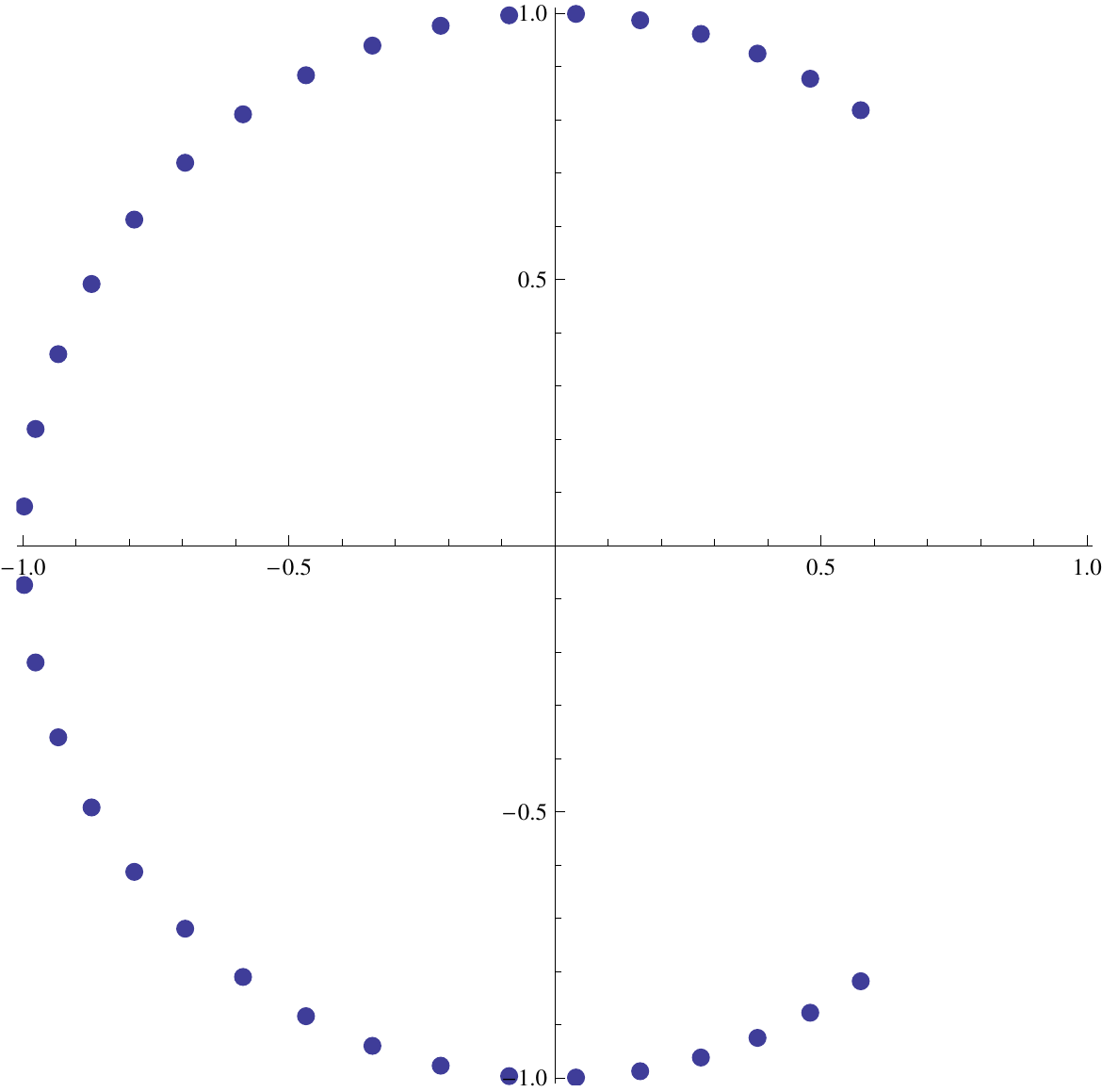}
\caption{A Mathematica plot of the zeros of $\Phi_{34}^{(-1)}(z)$ when $\alpha_n=-(n+2)^{-1/4}$.}
\end{center}
\end{figure}
The main result of this note is to give leading order asymptotics for the size of the gap in the zeros of the paraorthogonal polynomials around $\theta=0$ when $\beta_n\in(\pi/2+\epsilon,3\pi/2-\epsilon)$ for some $\epsilon>0$ and all $n$.  Along the way we will develop an idea suggested in \cite{Beware}.  In that paper, the authors deal with an explicit measure instead of explicit Verblunsky coefficients but observe the same phenomenon of a gap in the zeros of the orthogonal polynomials.  The authors then provide numerical evidence to suggest that the Verblunsky coefficients do satisfy a slow decay condition and attribute the gap in the zeros to the suggestion that the polynomial $\Phi_n$ ``thinks" it belongs to a measure with constant Verblunsky coefficients because the decay is so slow.
Our methods will allow us to make this idea more precise and put a condition on the sequence $\{\alpha_n\}_{n\geq0}$ that causes the zeros to exhibit a gap property as in the constant Verblunsky coefficient case.

We will often be interested in situations where the Verblunsky coefficients decay monotonically (in absolute value) to $0$.  However, this condition is more restrictive than is necessary for our results.  We will say that a sequence of Verblunsky coefficients $\{\alpha_n\}_{n\geq0}$ has \textit{slow decay controlled by $f$} if there is a non-positive function $f(n)$ so that
\begin{enumerate}
\item\label{either} either $f(n)=-Cn^{-b}$ for some $b\in(0,1)$ and $C>0$ or $\lim_{\nri}\sqrt{n}\,f^2(n-\sqrt{n})=\infty$,
\item $f(n)<f(n+1)<0$ for all $n>0$ and $\lim_{\nri}f(n)=0$,
\item $\lim_{\nri}\frac{f(n-k\sqrt{n})}{f(n)}=1$ for all $k\in\bbN$,
\item $\lim_{\nri}\frac{\alpha_n}{f(n)}=1$,
\item $\Real[\alpha_n]\leq f(m)$ when $n\leq m$ and $m$ is sufficiently large.
\end{enumerate}
Of course the two cases in condition (i) are not disjoint, but this will only create some flexibility in how we proceed in proofs.  Following the terminology in Section 5 of \cite{FineI}, orthogonal polynomials with $\ell^1$ Verblunsky coefficients are said to be in the \textit{Baxter Class}, so we are interested only in the non-Baxter case (results concerning Baxter class Verblunsky coefficients can be found in \cite{FineI}).
Our main result is the following

\begin{theorem}\label{kthzero}
Let $k\in\bbN$, $\tilde{\epsilon}>0$, and $\xi\in(\frac{\pi}{2}+\tilde{\epsilon},\frac{3\pi}{2}-\tilde{\epsilon})$ be fixed and let $\beta=e^{i\xi}$.  Let $\{\alpha_n\}_{n\geq0}$ be a sequence of real Verblunsky coefficients having slow decay controlled by $f$.
For each $\delta>0$, there exists $N_k=N_k(\delta)\in\bbN$ so that if $M>N_k$ then $\zeta_k^{(M)}$ obeys
\[
\left|\frac{\arg\left(\zeta_k^{(M)}\right)}{2|f(M)|}-1\right|<\delta.
\]
Similarly, we have
\[
\left|\frac{2\pi-\arg\left(\zeta_{M-k+1}^{(M)}\right)}{2|f(M)|}-1\right|<\delta.
\]
\end{theorem}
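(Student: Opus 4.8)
\medskip
\noindent\textbf{Plan of proof.}
I would work throughout with the cutoff CMV matrix. Recall from Chapter~4 of \cite{OPUC1} that the zeros of $\Phi_M^{(\beta)}$ are exactly the eigenvalues of the unitary $M\times M$ matrix $\mccm$ associated with the truncated Verblunsky sequence $\alpha_0,\dots,\alpha_{M-2},\beta$; they are simple and lie on $\partial\bbD$, which is what makes the labelling $\zeta_1^{(M)},\dots,\zeta_M^{(M)}$ meaningful. Because the $\alpha_n$ are real, $\Phi_{M-1}$ has real coefficients, and a one-line computation shows that $\zeta$ is a zero of $\Phi_M^{(\beta)}$ if and only if $\bar\zeta$ is a zero of $\Phi_M^{(\bar\beta)}$. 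Since $\bar\beta=e^{i(2\pi-\xi)}$ with $2\pi-\xi$ again in $(\tfrac{\pi}{2}+\tilde\epsilon,\tfrac{3\pi}{2}-\tilde\epsilon)$ and since conjugation reverses the counterclockwise order of the zeros, the second displayed estimate is precisely the first one applied with $\bar\beta$ in place of $\beta$. So it is enough to prove the first estimate, i.e.\ to show that for each fixed $\delta>0$ and all large $M$: (a) $\mccm$ has no eigenvalue $e^{i\theta}$ with $0<\theta\le 2(1-\delta)|f(M)|$; and (b) $\mccm$ has at least $k$ eigenvalues $e^{i\theta}$ with $0<\theta< 2(1+\delta)|f(M)|$. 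Together, (a) and (b) give $\arg(\zeta_k^{(M)})/(2|f(M)|)\in(1-\delta,1+\delta)$.

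\medskip
\noindent\textbf{The constant-coefficient model.}
The point of departure is the case $\alpha_n\equiv\alpha\in(-1,0)$. Here the Szeg\H{o} transfer matrix $A(\alpha,e^{i\theta})$ has determinant $e^{i\theta}$, and after factoring out the scalar $e^{i\theta/2}$ its eigenvalues $\mu_\pm$ satisfy $\mu_+\mu_-=1$ and $\mu_++\mu_-=2\cos(\theta/2)/\sqrt{1-\alpha^2}$. These are real with $|\mu_\pm|\ne 1$ exactly when $|\sin(\theta/2)|<|\alpha|$---that is, on the spectral gap $(-2\arcsin|\alpha|,\,2\arcsin|\alpha|)$ furnished by Theorem~1.6.13 of \cite{OPUC1}---and they equal $e^{\pm i\psi(\theta)/2}$ on the a.c.\ spectrum, where $\cos(\psi(\theta)/2)=\cos(\theta/2)/\sqrt{1-\alpha^2}$; thus $\psi(\theta)\downarrow 0$ as $\theta\downarrow 2\arcsin|\alpha|$, with $\psi(\theta)^2\sim \tfrac{4|\alpha|}{\sqrt{1-\alpha^2}}(\theta-2\arcsin|\alpha|)$ at the edge. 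In Pr\"ufer-type variables---equivalently, via the finite Blaschke product that describes the constant-coefficient problem, along the lines of \cite{Beware}---the zeros of the constant-coefficient $\Phi_M^{(\beta)}$ just above the gap are, to leading order, those $\theta$ for which $M\psi(\theta)$ lies in a fixed coset of $2\pi\bbZ$; hence the $k$-th such zero satisfies $\psi(\theta_k)\asymp k/M$ and therefore
\[
\theta_k \;=\; 2\arcsin|\alpha| \;+\; O\!\left(\frac{k^2}{|\alpha|\,M^2}\right).
\]
Replacing $\alpha$ by $\alpha_M$ (so that $2\arcsin|\alpha_M|=2|f(M)|(1+o(1))$ by (iv) and (ii)) this becomes $\theta_k/(2|f(M)|)=1+o(1)+O\big(k^2/(M^2f(M)^2)\big)$, and $M^2f(M)^2\to\infty$ is exactly what hypothesis (i) supplies---it equals $C^2M^{2-2b}$ in the power-law case and, using (iii), grows at least as fast as $M^{3/2}$ in the other case. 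This is the whole mechanism; the rest of the proof is to make it rigorous when the coefficients vary.

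\medskip
\noindent\textbf{From constant to slowly varying coefficients.}
Fix $\theta$ comparable to $|f(M)|$. By hypothesis (v) the real numbers $\alpha_0,\dots,\alpha_{M-2}$ all lie below $f(M-1)<0$, and since $|f|$ is decreasing with $f(M-1)=f(M)(1+o(1))$ by (iii), one has $|\sin(\theta/2)|<|\alpha_n|$ for \emph{every} $n\le M-2$ as soon as $\theta\le 2(1-\delta)|f(M)|$; thus the entire block of transfer matrices is hyperbolic at such $\theta$, with per-step expansion factor at least $1+c_\delta|f(n)|\ge 1+c_\delta|f(M)|$, so the accumulated hyperbolicity exponent up to index $M$ is $\ge c_\delta M|f(M)|\to\infty$. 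A standard subordinacy/transfer-matrix argument---of the kind used to rule out eigenvalues in spectral gaps---then shows that the solution of the Szeg\H{o} recursion started from the CMV boundary condition at $n=0$ is far too large near $n=M$ to also satisfy the boundary condition imposed by $\beta$, which proves (a). For (b) take $\theta\in(2|f(M)|,\,2(1+\delta)|f(M)|)$: now the ``elliptic region'' $\{n\le M:\ 2\arcsin|\alpha_n|<\theta\}$---the indices $n$ with $|f(n)|\lesssim\theta/2$---is a macroscopic fraction of $\{0,\dots,M\}$, across which the coefficients vary only by the slow amount permitted by (ii)--(iv); a WKB/Pr\"ufer estimate valid for such slowly varying coefficients shows the total phase accumulated by the Pr\"ufer angle over this region is $\asymp M|f(M)|\to\infty$, and the number of zeros of the boundary-value problem below $2(1+\delta)|f(M)|$ equals this phase divided by $2\pi$ up to a bounded error. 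Hence there are eventually more than $k$ of them, which is (b).

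\medskip
\noindent\textbf{Where the difficulty lies.}
The genuinely delicate part is the quantitative control of the frozen-coefficient/WKB approximation near the band edge. One must pick the scale on which the $\alpha_n$ are treated as constant (the natural one being $\sqrt M$, matching hypothesis (iii)) so that two competing demands are met at once: the approximate solutions built from the frozen dynamics are accurate enough that the resulting approximate eigenvectors of $\mccm$ are within $o(|f(M)|)$ of true eigenvalues in the spectral-inclusion bound for unitary operators---so that the \emph{shrinking} leading term $2|f(M)|$ is preserved---yet the scale is short enough that $\alpha_n$ is genuinely constant to that precision. Precisely at indices $n$ near $M$ the frozen transfer matrix degenerates from hyperbolic to elliptic as $\theta$ crosses $2\arcsin|\alpha_n|$, so the expansion/phase estimates driving both (a) and (b) break down there and must be replaced by a careful transition analysis; one also has to check that the CMV boundary vector at $n=0$ is never asymptotically aligned with the contracting direction, which is what upgrades (a) from ``$O(1)$ eigenvalues'' to ``no eigenvalue''. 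Verifying that every accumulated error is $o(|f(M)|)$ is what forces the precise form of hypotheses (i)--(v); everything else is routine bookkeeping with transfer matrices and the CMV formalism.
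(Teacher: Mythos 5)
Your overall structure (show no zeros inside the arc of argument $2(1-\delta)|f(M)|$, then show at least $k$ zeros below $2(1+\delta)|f(M)|$, then use the conjugate symmetry for the second display) matches the paper's, but the machinery you propose to carry it out is genuinely different from what the paper does, and the key estimates are asserted rather than established.

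For the lower bound (your part (a)), the paper proves an inductive \emph{phase} bound on the Blaschke product $b_n(e^{i\theta})=\varphi_n/\varphi_n^*$ (Lemma~\ref{openchoice}), showing that for $\theta\in(-\theta_\alpha,\theta_\alpha)$ the phase of $e^{i\theta}b_{M-1}(e^{i\theta})$ never reaches the arc where $\bar\beta$ lives when $\xi\in(\tfrac{\pi}{2}+\tilde\epsilon,\tfrac{3\pi}{2}-\tilde\epsilon)$. This is where the restriction on $\beta$ enters, and it is essential: for $\beta=1$ the constant-coefficient paraorthogonal polynomial \emph{does} have a zero at $z=1$, so exponential growth of transfer matrices in the gap cannot by itself rule out eigenvalues. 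Your hyperbolicity/subordinacy sketch acknowledges that one must ``check that the CMV boundary vector at $n=0$ is never asymptotically aligned with the contracting direction,'' but that is precisely the content of the Key Lemma, and you do not supply it. As written, your argument would equally ``prove'' the false statement for $\beta$ near $1$. The per-step expansion factor $1+c_\delta|f(n)|$ and the growth $e^{c_\delta M|f(M)|}$ are fine, but they are not sufficient without the phase information that tracks $\beta$-dependence.

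For the upper bound (your part (b)), the paper does something concrete and quite different: it builds explicit trial vectors for the decoupled CMV matrix $\mcc^{(M)}_{-1}$ -- a polynomial-profile vector $(\nu_n)_j=(j-\gamma_n)(n-j)$ times alternating phases $1,i$ in the power-law case, and a flat window vector in the very slow-decay case -- computes $\|(\mcc^{(M)}_{-1}-1)\nu_n\|^2/\|\nu_n\|^2$ by hand from the CMV entries, and invokes the spectral inclusion bound for normal operators. To obtain $k$ eigenvalues it then projects a family of translated trial vectors off the known eigenvectors and re-estimates, finally passing to general $\beta$ by interlacing (Theorem~1.3 of \cite{RankOne}). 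Your WKB/Pr\"ufer phase-counting route ($\#\{\text{zeros}\}\asymp (2\pi)^{-1}\,\Delta(\text{Pr\"ufer phase})$) is a legitimate alternative in spirit -- and in fact the paper's proof of the generalization Theorem~\ref{nonrealextend} does use Pr\"ufer variables and the recursion (\ref{pruferecur}) -- but you never carry out the WKB estimate near the band edge, which you yourself flag as ``the genuinely delicate part.'' The paper's construction buys exactly what you say is hard: it converts the near-band-edge transition region into a finite computation of $\ell^2$ norms of explicit vectors supported on a window of width $M^{(1+b)/2}$ (resp.\ $\sqrt M$), sidestepping the frozen-coefficient error analysis entirely.

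In short: the phenomenology you describe (constant-coefficient model, spectral gap $(-2\arcsin|\alpha|,2\arcsin|\alpha|)$, need for $M|f(M)|\to\infty$, freezing the coefficients on a $\sqrt M$ window) is the right intuition and matches the motivation in \cite{Beware}, but the two load-bearing estimates -- ``no eigenvalue'' via hyperbolicity, and ``$\ge k$ eigenvalues'' via WKB phase accumulation -- are not proved, and the first is missing the $\beta$-dependence without which the statement is false. Replacing them by the paper's Lemma~\ref{openchoice} and the explicit trial-vector/spectral-inclusion computation closes these gaps.
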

In \cite{Beware}, the authors provide numerical and strong heuristic evidence for Theorem \ref{kthzero} in the context of a closely related problem.  In Section \ref{extensions} we will prove a generalization of Theorem \ref{kthzero} that imposes less restrictive conditions on the Verblunsky coefficients.  As a special case of Theorem \ref{kthzero}, we get the following corollary -  conjectured in \cite{FineI} - which is of special interest.

\begin{corollary}\label{specialcase}
If $\alpha_n=-(n+2)^{-b}$ with $0<b<1$ then the size of the gap in the zeros of $\Phi_n^{(-1)}(z)$ around $z=1$ is $4\arcsin|\alpha_n|$ to leading order.  The neighboring zero spacings are of smaller order.
\end{corollary}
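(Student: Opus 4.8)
The plan is to deduce Corollary \ref{specialcase} directly from Theorem \ref{kthzero}, so the first step is to verify that the sequence $\alpha_n=-(n+2)^{-b}$ with $0<b<1$ actually satisfies the hypothesis of Theorem \ref{kthzero}, namely that it is a sequence of real Verblunsky coefficients having slow decay controlled by some $f$. The natural candidate is $f(n)=-(n+2)^{-b}=\alpha_n$ itself (or a minor variant), and one must check the five conditions in the definition of slow decay: condition (i) holds with $C=1$ in the first alternative (the polynomial-decay case); condition (ii) is immediate since $n\mapsto -(n+2)^{-b}$ is strictly increasing to $0$; condition (iii) follows from $\frac{f(n-k\sqrt n)}{f(n)}=\left(\frac{n+2}{n+2-k\sqrt n}\right)^{b}\to 1$ as $n\to\infty$ because $k\sqrt n/n\to 0$; condition (iv) is trivially satisfied with equality; and condition (v) holds because $\Real[\alpha_n]=\alpha_n=-(n+2)^{-b}$ is itself increasing in $n$, so $\alpha_n\le\alpha_m=f(m)$ whenever $n\le m$. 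One should also note that $\beta=-1$ corresponds to $\xi=\pi$, which lies in $(\pi/2+\tilde\epsilon,3\pi/2-\tilde\epsilon)$ for any sufficiently small $\tilde\epsilon>0$, so Theorem \ref{kthzero} applies with this choice of $\beta$.

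Next I would translate the conclusion of Theorem \ref{kthzero} into a statement about the gap. The gap in the zeros around $z=1$ is, by the counterclockwise ordering of the zeros starting from $1$, the arc from $\zeta_M^{(M)}$ (the last zero before $1$) to $\zeta_1^{(M)}$ (the first zero after $1$); its angular size is $\arg(\zeta_1^{(M)})+\big(2\pi-\arg(\zeta_M^{(M)})\big)$. Applying Theorem \ref{kthzero} with $k=1$ gives $\arg(\zeta_1^{(M)})=2|f(M)|(1+o(1))$ and $2\pi-\arg(\zeta_M^{(M)})=2|f(M)|(1+o(1))$ as $M\to\infty$, so the gap size is $4|f(M)|(1+o(1))=4|\alpha_M|(1+o(1))$. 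Since $\arcsin|\alpha_M|=|\alpha_M|(1+o(1))$ as $|\alpha_M|\to 0$ (because $\alpha_M\to 0$), this is exactly $4\arcsin|\alpha_M|$ to leading order, which is the first assertion of the corollary. For the statement about neighboring zero spacings, I would apply Theorem \ref{kthzero} with $k=1$ and $k=2$: the spacing between $\zeta_1^{(M)}$ and $\zeta_2^{(M)}$ is $\arg(\zeta_2^{(M)})-\arg(\zeta_1^{(M)})$, and since both $\arg(\zeta_1^{(M)})$ and $\arg(\zeta_2^{(M)})$ equal $2|f(M)|(1+o(1))$, the difference is $o(|f(M)|)=o(|\alpha_M|)$, hence of strictly smaller order than the gap; the symmetric argument handles $\zeta_{M-1}^{(M)}$ and $\zeta_M^{(M)}$.

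The main obstacle here is not in the deduction itself, which is essentially bookkeeping, but in being careful about two points. First, one must confirm that $f(n)=\alpha_n$ (rather than some separate slowly-varying majorant) genuinely satisfies all of the slow-decay conditions — in particular condition (v), which compares $\Real[\alpha_n]$ to $f(m)$ and is the only condition that couples different indices in a nontrivial inequality; for this specific sequence it holds because of monotonicity, but one should state this explicitly. Second, one must be slightly careful that the $o(1)$ error terms coming from Theorem \ref{kthzero} are uniform enough to conclude the statement about \emph{neighboring} spacings: Theorem \ref{kthzero} as stated gives, for each fixed $\delta>0$, an $N_k(\delta)$, so for each fixed $k$ one gets $\arg(\zeta_k^{(M)})=2|f(M)|+O(\delta|f(M)|)$ for $M>N_k(\delta)$, and taking $\delta\to 0$ (with $k=1,2$ fixed) shows $\arg(\zeta_2^{(M)})-\arg(\zeta_1^{(M)})=o(|f(M)|)$, which suffices. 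No new ideas beyond Theorem \ref{kthzero} and the elementary estimate $\arcsin t= t(1+o(1))$ as $t\to 0$ are needed.
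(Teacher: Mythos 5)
Your deduction is the intended one and the paper itself gives no separate proof of Corollary~\ref{specialcase}: one simply checks that $\alpha_n=-(n+2)^{-b}$ has slow decay controlled by a suitable $f$ and then applies Theorem~\ref{kthzero} with $k=1,2$ (and $\xi=\pi$), together with $\arcsin t=t(1+o(1))$. Your computation of the gap size and the neighboring spacings is correct, and you are right to single out condition (v) as the nontrivial one.

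There is, however, one claim that would not survive a literal reading of the definition: you assert that $f(n)=-(n+2)^{-b}$ satisfies condition (i) ``with $C=1$ in the first alternative,'' but $-(n+2)^{-b}$ is \emph{not} of the form $-Cn^{-b}$, and for $b\ge 1/4$ the second alternative $\sqrt{n}\,f^2(n-\sqrt{n})\to\infty$ fails as well. Worse, no choice of $f$ rescues this literally: if one instead takes $f(n)=-Cn^{-b}$, then condition (iv) forces $C=1$, but with $C=1$ condition (v) fails at $n\in\{m-1,m\}$, since $-(m+2)^{-b}>-m^{-b}$. So either condition (i) must be read as $f(n)\sim-Cn^{-b}$ (which is enough for the choice of $\gamma_n$ and the estimate of $A_2$ in the proof of Theorem~\ref{kthzero}), or condition (v) must be read with some slack. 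This is an imprecision in the paper's definition of slow decay rather than an error in your argument, but you should not state that condition (i) holds exactly; rather say that $f(n)=-(n+2)^{-b}=-n^{-b}+o(n^{-b})$ is asymptotically of the required polynomial form, which is what the proof of Theorem~\ref{kthzero} actually uses, and then conditions (ii)--(v) hold exactly with $f=\alpha$ as you observed.
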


\noindent The latter possibility in condition (i) on $f$ means we can also apply Theorem \ref{kthzero} to the sequence of Verblunsky coefficients given for example by $\alpha_n=\frac{-1}{\log(n+3)}$.

Following the wisdom that an equality is two inequalities, we will prove Theorem \ref{kthzero} in two steps; first we prove a lower bound on the size of the gap and then an upper bound.  We will prove the lower bound by using the Szeg\H{o} recursion to control the phases of the Blaschke product $\Phi_n(\eitheta)/\Phi_n^*(\eitheta)$ when $\theta$ is sufficiently close to $0$.  To prove the upper bound, we will find a large collection of mutually orthogonal approximate eigenvectors for a unitary matrix whose characteristic polynomial is $\Phi_n^{(-1)}(z)$ and apply the variational principle.  The result for $\beta\neq-1$ will follow from the interlacing of zeros for distinct values of $\beta$ (see Theorem 1.3 in \cite{RankOne}).

In the next section we will prove the key lemma for the lower bound and explain how it relates to the ideas put forward in \cite{Beware}.  In Section \ref{sizegap}, we will prove Theorem \ref{kthzero}.  In Section \ref{extensions} we will discuss modifications that we can make to the hypotheses of Theorem \ref{kthzero} and state a related theorem for the orthogonal polynomials on the unit circle that can be proved using our techniques.  Along the way we will add our result to several previously known results to explain several features of Figure \ref{zeros}.

\vspace{2mm}

\noindent\textbf{Acknowledgements.}  It is a pleasure to thank Barry Simon for introducing me to this problem and for much useful discussion.

\section{The Key Lemma}\label{keylemma}

We recall a version of the Szeg\H{o} recursion used by Khruschev and given in \cite{OPUC2} as equation (9.2.16), which states
\begin{eqnarray}\label{kru}
b_{n+1}(\eitheta)=\frac{\eitheta b_n(\eitheta)-\bar{\alpha}_n}{1-\alpha_n\eitheta b_n(\eitheta)}
\end{eqnarray}
where $b_m(\eitheta)=\varphi_m(\eitheta)/\varphi_m^*(\eitheta)$ (notice that $|b_m(\eitheta)|=1$).  Although Theorem \ref{kthzero} requires the Verblunsky coefficients to be real, we will prove our lower bound under more general hypotheses (see also Corollary \ref{nopp} below).

In order to state the key lemma, we first define the region $P_\alpha$ for $\alpha\in\{z:|z+\frac{1}{2}|<\frac{1}{2}\}$ as all $z\in\bbD$ with real part less than or equal to $\Real(\alpha)$.  Also, following the notation from \cite{OPUC1}, we define $\theta_{\alpha}=2\arcsin|\alpha|$.
The following lemma is the key to the lower bound.

\begin{lemma}\label{openchoice}
Let $\alpha$ be a real number in $(-1/2,0)$.  If $\alpha_0,\alpha_1,\ldots,\alpha_{n-1}\in P_{\alpha}$ then
\[
\arg(b_n(\eitheta))\in\left(-\frac{\pi}{2}+\arcsin|\alpha|,\frac{\pi}{2}-\arcsin|\alpha|\right)
\]
for all $\theta\in\gap$.
\end{lemma}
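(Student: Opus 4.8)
The plan is to reformulate the conclusion in terms of $\Real b_n(\eitheta)$ and then induct on $n$ using the recursion~\eqref{kru}. For a unimodular $w$, the condition $\arg w\in\bigl(-\tfrac{\pi}{2}+\arcsin|\alpha|,\ \tfrac{\pi}{2}-\arcsin|\alpha|\bigr)$ is exactly $\Real w>|\alpha|$, since $\tfrac{\pi}{2}-\arcsin|\alpha|=\arccos|\alpha|$ and the arc of $\partial\bbD$ of half-angle $\arccos|\alpha|$ about the point $1$ is $\{w\in\partial\bbD:\Real w>|\alpha|\}$. So it suffices to prove, by induction on $n$, that if $\alpha_0,\dots,\alpha_{n-1}\in P_\alpha$ then $\Real b_n(\eitheta)>|\alpha|$ for every $\theta\in\gap$. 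The base case is immediate because $b_0\equiv1$. For the inductive step I would rewrite~\eqref{kru} as $b_{n+1}(\eitheta)=M_{\alpha_n}\bigl(\eitheta b_n(\eitheta)\bigr)$, where $M_a(w):=\dfrac{w-\bar a}{1-aw}$ is the disk automorphism sending $\bar a$ to $0$ (so $M_a$ carries $\partial\bbD$ onto itself and $M_a^{-1}=M_{-a}$).

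Assuming $\Real b_n(\eitheta)>|\alpha|$ for all $\theta\in\gap$, i.e.\ $\arg b_n(\eitheta)\in\bigl(-\tfrac{\pi}{2}+\arcsin|\alpha|,\ \tfrac{\pi}{2}-\arcsin|\alpha|\bigr)$, and using $|\theta|<\thalph=2\arcsin|\alpha|$, adding arguments gives $\arg\bigl(\eitheta b_n(\eitheta)\bigr)\in\bigl(-\tfrac{\pi}{2}-\arcsin|\alpha|,\ \tfrac{\pi}{2}+\arcsin|\alpha|\bigr)$ (and $\arcsin|\alpha|<\tfrac{\pi}{2}$ keeps us in the principal branch), hence $\Real\bigl(\eitheta b_n(\eitheta)\bigr)>-|\alpha|$. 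Thus the inductive step follows once we know the geometric claim: \emph{if $a\in\bbD$ with $\Real a\le-|\alpha|$ (equivalently $a\in P_\alpha$, since $\alpha$ is a negative real) and $w\in\partial\bbD$ with $\Real w>-|\alpha|$, then $\Real M_a(w)>|\alpha|$.} This is a one-step version of the Bernstein--Szeg\H{o} gap phenomenon for constant Verblunsky coefficients; note, however, that $P_\alpha\not\subseteq\{z:|z+\frac{1}{2}|<\frac{1}{2}\}$, so Theorem 1.6.13 of~\cite{OPUC1} is not directly available.

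I would establish the geometric claim in three steps. (a) \emph{Real case $a=-|\alpha|$:} a direct computation shows that $\Real M_{-|\alpha|}(w)-|\alpha|$ is a positive scalar multiple of $\Real w+|\alpha|$, so the claim (even with equality on the boundary $\Real w=-|\alpha|$) holds when $a=-|\alpha|$. (b) \emph{Preimages of the boundary points of the target arc:} for general $a$ with $\Real a\le-|\alpha|$, let $\zeta_\pm:=|\alpha|\pm i\sqrt{1-|\alpha|^2}$ be the two points of $\partial\bbD$ of real part $|\alpha|$; a structurally identical computation gives $\Real M_{-a}(\zeta_\pm)\le-|\alpha|$, the only inputs being $1+|\alpha|\Real a>0$, the identity $(1+|\alpha|\Real a)^2-(1-(\Real a)^2)(1-|\alpha|^2)=(\Real a+|\alpha|)^2\ge0$, and $(\Imag a)^2<1-(\Real a)^2$. (c) \emph{Deformation:} fix $w$ with $\Real w>-|\alpha|$ and set $a_t:=-|\alpha|+t\bigl(a+|\alpha|\bigr)$ for $t\in[0,1]$, so $a_0=-|\alpha|$, $a_1=a$, and each $a_t\in P_\alpha$. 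The function $t\mapsto\Real M_{a_t}(w)$ is continuous and exceeds $|\alpha|$ at $t=0$ by (a); if it ever equalled $|\alpha|$ then $M_{a_t}(w)\in\{\zeta_+,\zeta_-\}$, whence $w=M_{-a_t}(\zeta_\pm)$ and then $\Real w\le-|\alpha|$ by (b), contradicting our assumption. Hence $\Real M_a(w)>|\alpha|$, which closes the induction.

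The main obstacle is the geometric claim, and the one point in it that is not a routine computation is step (c): $M_a$ maps the arc $\{\Real w>-|\alpha|\}$ onto some arc, and one must rule out that this image reaches $\{\Real>|\alpha|\}$ ``the long way around the circle''; the intermediate-value argument disposes of this topological subtlety and reduces the whole lemma to the two explicit evaluations in (a) and (b). I would also note --- consistent with the remark preceding the lemma and relevant for the later sections --- that nothing in the argument uses that the $\alpha_j$ are real; only $\alpha_j\in P_\alpha$ is needed.
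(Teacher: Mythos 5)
Your proposal is correct, and it takes a genuinely different route from the paper's own proof. The paper first reduces to the single boundary point $\theta=\theta_\alpha$ by invoking the monotonicity of $\theta\mapsto\arg b_n(e^{i\theta})$ (equation 10.8 of \cite{FineIV}) together with a conjugation symmetry of $P_\alpha$, and then runs the induction at that one point, locating the ``worst'' $\alpha_j$ on the circular boundary of $P_\alpha$ (at $e^{i(\pi/2+\arcsin|\alpha|)}$) via the monotonicity of $t\mapsto\arg(e^{it}-s)-\arg(1-se^{it})$ (equation 6.10 of \cite{Stoiciu}). You instead rephrase the conclusion as $\Real b_n(e^{i\theta})>|\alpha|$, treat all $\theta\in(-\theta_\alpha,\theta_\alpha)$ at once, and reduce the inductive step to the single geometric claim that $M_a$ carries $\{w\in\partial\bbD:\Real w>-|\alpha|\}$ into $\{w\in\partial\bbD:\Real w>|\alpha|\}$ whenever $a\in P_\alpha$; this you settle with two explicit computations plus a connectedness/intermediate-value argument that rules out the image arc ``wrapping the long way.'' The trade-off is clear: your argument is self-contained and avoids the two external monotonicity facts the paper imports, at the cost of a slightly longer chain of algebraic verifications (I checked them: $\Real M_{-|\alpha|}(w)-|\alpha|=\frac{(1-|\alpha|^2)(\Real w+|\alpha|)}{|1+|\alpha|w|^2}$, and $\Real M_{-a}(\zeta_\pm)+|\alpha|=\frac{2(\Real a+|\alpha|)(1+|\alpha|\Real a\mp\Imag a\sqrt{1-|\alpha|^2})}{|1+a\zeta_\pm|^2}$, whose sign is controlled exactly as you describe); the paper's argument is shorter on the page but leans on cited results and a geometric maximization that it asserts rather than computes. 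Both proofs correctly exploit only $\alpha_j\in P_\alpha$ and not realness of the $\alpha_j$, as you note, which is what the paper needs for the remarks that follow the lemma.
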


\begin{proof}
The proof is by induction.  By equation 10.8 in \cite{FineIV}, the phase of the Blaschke product $b_n(\eitheta)$ increases with $\theta$.  Furthermore, if we prove that $\arg(b_n(e^{i\thalph}))<\frac{\pi}{2}-\arcsin|\alpha|$ then we will have also proven that $\arg(b_n(e^{-i\thalph}))>-\frac{\pi}{2}+\arcsin|\alpha|$ by symmetry. Then using the monotonicity of $\arg(b_n(\eitheta))$ the lemma will follow.

Using the recursion mentioned above, we would like to choose $\alpha_0$ so that
\[
\arg(\eithalph-\bar{\alpha}_0)-\arg(1-\alpha_0\eithalph)<\pi/2-\arcsin|\alpha|
\]
where we have the $\arg$ function take values in $[-\pi,\pi)$.  With this in mind, choose any $\alpha_0$ in $P_\alpha$.
By construction, $\arg(\eithalph-\bar{\alpha}_0)-\arg(1-\alpha_0\eithalph)$ is maximized over $\alpha_0\in P_{\alpha}$ when $\alpha_0=e^{i(\frac{\pi}{2}+\arcsin|\alpha|)}$.
Therefore,
\begin{eqnarray*}
\arg(\eithalph-\bar{\alpha}_0)-\arg(1-\alpha_0\eithalph)&<&
\arg(\eithalph-e^{-i(\frac{\pi}{2}+\arcsin|\alpha|)})-\arg(1-e^{i(\frac{\pi}{2}+\arcsin|\alpha|)}\eithalph)\\
&\leq&\frac{\pi}{2}-\arcsin|\alpha|
\end{eqnarray*}
where we used only elementary geometry to derive this last upper bound.
Therefore, we have completed the base case by showing that choosing $\alpha_0\in P_{\alpha}$ leads to
\[
\arg(b_1(\eithalph))<\frac{\pi}{2}-\arcsin|\alpha|.
\]

For our induction hypothesis, let us assume that $\alpha_0,\alpha_1,\ldots,\alpha_{j-1}$ have been chosen from $P_\alpha$ so that $b_j(z)$ maps the circular arc $(-\thalph,\thalph)$ to some sub-arc of
$(-\frac{\pi}{2}+\arcsin|\alpha|,\frac{\pi}{2}-\arcsin|\alpha|)$.
We will show that choosing $\alpha_j$ from $P_{\alpha}$ implies the same condition holds for $b_{j+1}(z)$.  Again by symmetry, it suffices to put an upper bound on $\arg(b_{j+1}(\eithalph))$.  Our induction hypothesis implies that
\[
-\pi/2+3\arcsin|\alpha|<\arg(\eithalph b_j(\eithalph))<\pi/2+\arcsin|\alpha|
\]
so that we have a picture similar to the one we used when choosing $\alpha_0$.  Therefore, the argument of $b_{j+1}(\eithalph)$ is again maximized by setting $\alpha_j=e^{i(\frac{\pi}{2}+\arcsin|\alpha|)}$.  Observe that
for $s\in\bbD\cup\{e^{ix}:x\in(\frac{\pi+\thalph}{2},\frac{3\pi}{2})\}$ and $t\in(-\frac{\pi}{2},\frac{\pi+\thalph}{2})$, we have
\[
\frac{d}{dt}\bigg(\arg(e^{it}-s)-\arg(1-se^{it})\bigg)>0
\]
(see Equation 6.10 in \cite{Stoiciu}).
This implies
\begin{eqnarray*}
\arg(b_{j+1}(\eithalph))&\leq&
\arg(\eithalph b_j(\eithalph)-e^{-i(\frac{\pi}{2}+\arcsin|\alpha|)})
-\arg(1-e^{i(\frac{\pi}{2}+\arcsin|\alpha|)}\eithalph b_j(\eithalph))\\
&<&\arg(e^{i(\pi/2+\arcsin|\alpha|)}-e^{-i(\frac{\pi}{2}+\arcsin|\alpha|)})
-\arg(1-e^{i(\frac{\pi}{2}+\arcsin|\alpha|)} e^{i(\pi/2+\arcsin|\alpha|)})\\
&=&\frac{\pi}{2}-\arg(1+\eithalph)\\
&=&\frac{\pi}{2}-\arcsin|\alpha|.
\end{eqnarray*}
This completes the inductive step.
\end{proof}

Let $\alpha<0$ and let $\Phi_n(z)$ and $\Psi_n(z)$ be the monic orthogonal and second kind polynomials respectively corresponding to the sequence of Verblunsky coefficients $\{\alpha_n\}_{n\geq0}$.  Define the measure $\mu_{-n}=\mu(\alpha_0,\ldots,\alpha_{n-1},\alpha,\alpha,\ldots)$ with corresponding Caratheodory function $F_{-n}$ (see Chapter 1.3 in \cite{OPUC1}) and let $\mu_0=\mu(\alpha,\alpha,\alpha,\ldots)$ with corresponding Caratheodory function $F_0$.  Let us recall a formula of Peherstorfer given as Theorem 3.4.2 in \cite{OPUC1}.  Using the notation above, it tells us that
\[
F_{-n}(z)=\frac{\Psi^*_n(z)-\Psi_n(z)+(\Psi^*_n(z)+\Psi_n(z))F_0(z)}
{\Phi^*_n(z)+\Phi_n(z)+(\Phi^*_n(z)-\Phi_n(z))F_0(z)}.
\]
By Lemma 3.2.15 in \cite{OPUC1}, the measure $\mu_{-n}$ has a pure point if and only if the Caratheodory function $F_{-n}$ has a point at which it blows up in the arc $(-\thalph,\thalph)$.  This occurs when
\begin{eqnarray*}
F_0(\eitheta)&=&\frac{\Phi^*_n(\eitheta)+\Phi_n(\eitheta)}{\Phi_n(\eitheta)-\Phi^*_n(\eitheta)}
=\frac{b_n(\eitheta)+1}{b_n(\eitheta)-1}
=-i\cot\bigg(\frac{\arg(b_n(\eitheta))}{2}\bigg).
\end{eqnarray*}
One can calculate that for $\alpha\in\bbR$,
\[
-iF_0(\eitheta)=\frac{\alpha\cot(\theta/2)+\csc(\theta/2)\sqrt{\alpha^2-\sin^2(\theta/2)}}{1+\alpha}
\]
when $\theta$ is in $\gap$.  Using Lemma \ref{openchoice}, one can control the argument of $b_n(\eitheta)$ so that $iF_0(\eitheta)\neq\cot(\arg(b_n(\theta))/2)$ for all $\theta\in\gap$ and hence arrive at the following

\begin{corollary}\label{nopp}
Let $\alpha$ be a real number in $(-1/2,0)$.  If $\alpha_0,\alpha_1,\ldots,\alpha_{n-1}\in P_{\alpha}$ then the resulting measure $\mu(\alpha_0,\alpha_1,\ldots,\alpha_{n-1},\alpha,\alpha,\ldots)$ has no pure points in $\gap$.
\end{corollary}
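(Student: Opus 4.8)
The plan is to run the chain of equivalences assembled just before the corollary and then close the loop with the explicit formula for $F_0$ on the gap. First I would note that the monic orthogonal polynomial $\Phi_n$ has all of its zeros in $\bbD$, so $b_n(\eitheta)=\Phi_n(\eitheta)/\Phi_n^*(\eitheta)$ is a well-defined unimodular function on $\partial\bbD$. By Lemma 3.2.15 in \cite{OPUC1}, $\mu_{-n}$ has a pure point in $\gap$ if and only if $F_{-n}$ has a pole there, and by Peherstorfer's formula such a pole forces the denominator $\Phi_n^*(\eitheta)+\Phi_n(\eitheta)+(\Phi_n^*(\eitheta)-\Phi_n(\eitheta))F_0(\eitheta)$ to vanish for some $\theta\in\gap$. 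Since that denominator equals $2\Phi_n(\eitheta)\neq0$ whenever $b_n(\eitheta)=1$, a pole can only occur at a point with $b_n(\eitheta)\neq1$, where — exactly as displayed above — the vanishing rearranges to
\[
iF_0(\eitheta)=\cot\!\left(\tfrac{1}{2}\arg(b_n(\eitheta))\right).
\]
So it suffices to prove that this equation has no solution $\theta\in\gap$.

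To control the right-hand side I would invoke Lemma \ref{openchoice}, which applies because $\alpha\in(-1/2,0)$ and $\alpha_0,\dots,\alpha_{n-1}\in P_\alpha$; it gives $\arg(b_n(\eitheta))\in\bigl(-\tfrac{\pi}{2}+\arcsin|\alpha|,\tfrac{\pi}{2}-\arcsin|\alpha|\bigr)$ for every $\theta\in\gap$. Hence $\tfrac12\arg(b_n(\eitheta))$ lies in the symmetric open interval $I:=\bigl(-\tfrac{\pi}{4}+\tfrac12\arcsin|\alpha|,\tfrac{\pi}{4}-\tfrac12\arcsin|\alpha|\bigr)\subset(-\tfrac\pi4,\tfrac\pi4)$. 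Writing $c:=\cot\bigl(\tfrac{\pi}{4}-\tfrac12\arcsin|\alpha|\bigr)$, the monotonicity of $\cot$ on $(0,\pi/2)$ together with its oddness show $\cot(I\setminus\{0\})=(-\infty,-c)\cup(c,\infty)$, which is disjoint from $[-c,c]$. Since the case $b_n(\eitheta)=1$ was already excluded, the claim reduces to showing $|iF_0(\eitheta)|\le c$ for all $\theta\in\gap$.

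For the left-hand side I would start from the formula recalled above, $-iF_0(\eitheta)=\dfrac{\alpha\cot(\theta/2)+\csc(\theta/2)\sqrt{\alpha^2-\sin^2(\theta/2)}}{1+\alpha}$, valid for $\theta\in\gap$, and substitute $\theta=2u$ with $u\in(-\beta,\beta)$, where $\beta:=\arcsin|\alpha|\in(0,\pi/6)$, so that $|\alpha|=\sin\beta$ and (using $\alpha<0$) $\alpha=-\sin\beta$. Using $\sin^2\beta-\sin^2u=\sin(\beta+u)\sin(\beta-u)$ and $2\sin\beta\cos u=\sin(\beta+u)+\sin(\beta-u)$, the numerator $-\sin\beta\cos u+\sqrt{\sin^2\beta-\sin^2u}$ collapses to $-\tfrac12\bigl(\sqrt{\sin(\beta+u)}-\sqrt{\sin(\beta-u)}\bigr)^2$, and rationalizing this square yields
\[
-iF_0\bigl(e^{2iu}\bigr)=\frac{-2\cos^2\beta\,\sin u}{(1-\sin\beta)\bigl(\sqrt{\sin(\beta+u)}+\sqrt{\sin(\beta-u)}\bigr)^2}.
\]
On $[0,\beta]$ the numerator $2\cos^2\beta\,\sin u$ is increasing while the denominator $(1-\sin\beta)\bigl(2\sin\beta\cos u+2\sqrt{\sin^2\beta-\sin^2u}\bigr)$ is decreasing, so $|iF_0(e^{2iu})|$ is increasing in $|u|$ and is maximized on $[-\beta,\beta]$ at $u=\pm\beta$, where it equals $\dfrac{\cos\beta}{1-\sin\beta}=\dfrac{1+\sin\beta}{\cos\beta}=\cot\bigl(\tfrac{\pi}{4}-\tfrac\beta2\bigr)=c$. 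Hence $|iF_0(\eitheta)|\le c$ on $\gap$ (in fact $<c$ on the open gap), which is exactly what was needed. Combining the two halves, $iF_0(\eitheta)$ and $\cot\bigl(\tfrac12\arg(b_n(\eitheta))\bigr)$ lie in disjoint sets for every $\theta\in\gap$, so $F_{-n}$ has no pole in $\gap$ and $\mu_{-n}$ has no pure point there.

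I expect the only real work to be this middle computation: bringing the $F_0$ formula into a shape where its monotonicity in $|u|$ is manifest and its supremum over the gap is seen to be precisely $\cot\bigl(\tfrac{\pi}{4}-\tfrac12\arcsin|\alpha|\bigr)$ — the very bound produced by Lemma \ref{openchoice} — so that the two ranges are just barely disjoint. Everything else is assembling results already quoted (Peherstorfer's formula and Lemma 3.2.15 of \cite{OPUC1}, and Lemma \ref{openchoice}) together with elementary monotonicity of $\cot$.
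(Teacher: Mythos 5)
Your proposal is correct and takes essentially the same route the paper sketches: you use Peherstorfer's formula and Lemma~3.2.15 of \cite{OPUC1} to reduce the absence of pure points to the non-equation $iF_0(\eitheta)\neq\cot\bigl(\tfrac12\arg b_n(\eitheta)\bigr)$ on $\gap$, and you use Lemma~\ref{openchoice} to confine $\cot\bigl(\tfrac12\arg b_n\bigr)$ to $\{|w|>c\}$ with $c=\cot\bigl(\tfrac\pi4-\tfrac12\arcsin|\alpha|\bigr)$. The paper leaves the matching bound $|iF_0|<c$ on $\gap$ implicit (``one can control the argument of $b_n$...''), and your algebraic simplification of $-iF_0$ to $\dfrac{-2\cos^2\beta\sin u}{(1-\sin\beta)\bigl(\sqrt{\sin(\beta+u)}+\sqrt{\sin(\beta-u)}\bigr)^2}$, together with the monotonicity-in-$|u|$ observation and the identity $\frac{1+\sin\beta}{\cos\beta}=\cot\bigl(\tfrac\pi4-\tfrac\beta2\bigr)$, is a clean and correct way to fill in exactly that gap.
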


\noindent\textit{Remark.}  Corollary \ref{nopp} refines the estimate given by Theorem 3.4.7 in \cite{OPUC1}, which puts a bound on the number of pure points (under more general hypotheses).  A related result can also be found in \cite{Nevai} for the special case when all the $\alpha_j\in\bbR$ where it is shown that $1$ is not a pure point.

\vspace{1mm}

Corollary \ref{nopp} makes precise the claim in \cite{Beware} that the gap in the zeros results from the polynomials ``thinking" they are in the constant Verblunsky coefficient case.  Put differently, the orthogonal polynomial $\Phi_n$ is the monic degree $n$ orthogonal polynomial for \textit{some} measure having exactly the same support as the spectral measure corresponding to constant Verblunsky coefficients (which we described in Section \ref{intro}).  The zeroes of $\Phi_n$ therefore lie in the convex hull of this measure (this is by Fejer's Theorem, see Theorem 1.7.19 in \cite{OPUC1}).  However, since $\eitheta\in\overline{P}_{\alpha}$ for all $\theta\in(\frac{\pi}{2}+\arcsin|\alpha|,\frac{3\pi}{2}-\arcsin|\alpha|)$, we can move $\alpha_{n-1}$ to such a point on $\partial\bbD$ inside $P_{\alpha}$ and retain this restriction on the location of the zeros of $\Phi_n$.  Therefore, by the Hurwitz Theorem we have a lower bound on the size of the gap for all $\beta$ of the form considered in Theorem \ref{kthzero} when $\alpha$ is sufficiently small (i.e. $M$ is sufficiently large).  In the next section we will complete the proof of the main theorem.

\section{Finding The Zeros}\label{sizegap}

To establish an upper bound on the distance from $\zeta_k^{(M)}$ to $1$, we resort to a tool from spectral theory.
It is well-known that for any normal matrix $N$, if there exists a unit vector $\nu$ such that
\[
||(N-z_0)\nu||<\epsilon
\]
then $N$ has an eigenvalue in the ball $\{z:|z-z_0|<\epsilon\}$ (see Theorem 5.9 in \cite{Hislop} for a proof and see \cite{DaviesSimon} and Theorem 4.1 in \cite{Bandtlow} for generalizations).  The paraorthogonal polynomials $\Phi_{n}^{(\beta)}$ are obtained by setting $\alpha_{n-1}=\beta\in\partial\bbD$.  The resulting CMV matrix then decouples and the upper-left $n\times n$ block $\mccn_\beta$ is in fact a unitary (and hence normal) matrix.  Since
\[
\Phi_{n}^{(\beta)}(z)=\det(z-\mccn_{\beta}),
\]
then in order to show the existence of a zero of $\Phi_{n}^{(\beta)}$ within a certain proximity of $1$, it will suffice to find a unit vector $\nu_n$ such that $||(\mccn_\beta-1)\nu_n||$ is small.  We will define $\nu_n$ for each $n$ by
\[
(\nu_{n})_j=
\begin{cases}
(j-\gamma_{n})(n-j), &\mbox{ $j\in(\gamma_{n},n)$ even}\\
i(j-\gamma_{n})(n-j), &\mbox{ $j\in(\gamma_{n},n)$ odd}\\
0, &\mbox{ otherwise}
\end{cases}
\]
where $\gamma_n$ will depend on $f$.

If $\alpha_m=-Cm^{-b}+o(m^{-b})$ with $b\in(0,1)$ then we set $\gamma_n=n-n^{\frac{1+b}{2}}$ (more precisely the closest integer to this quantity but we continue to denote it by $n-n^{\frac{1+b}{2}}$).  Using the form of the CMV matrix in Section 4.2 in \cite{OPUC1} we can calculate
$\|(\mccn_{-1}-1)\nu_{n}\|^2=\sum_{j=1}^3A_j$ where (we set $x_j=|(\nu_n)_j|$)
\begin{equation}
A_1(n)=O(n^{1+b})
\label{aone}
\end{equation}
\begin{equation}
A_2(n)=\sum_{j=\gamma_n+4}^{n-4}\frac{C^2}{j^{2b}}
\bigg|\left(1+O\left(j^{-2b}\right)\right)\left(1+O\left(j^{-1}\right)\right)x_{j-1}+
\left(1+O\left(j^{-2b}\right)\right)\left(1+O\left(j^{-1}\right)\right)x_{j+1}\bigg|^2
\label{atwo}
\end{equation}
\begin{equation}
A_3(n)=\sum_{{j=\gamma_n+4}\atop {j\, odd}}^{n-4}
\left|x_{j+2}-x_{j}+O\left(j^{-2b}\right)(x_{j}+x_{j+2})\right|^2
+\sum_{{j=\gamma_n+4}\atop {j\, even}}^{n-4}\left|x_{j-2}-x_{j}+O\left(j^{-2b}\right)(x_{j}+x_{j-2})\right|^2.
\label{athree}
\end{equation}
Additionally, using well-known formulas for $\sum_{j=1}^{Q}j^p$ for $p\in\{0,1,2,3,4\}$ (see formula 23.1.4 on page 804 in \cite{Handbook}) we can calculate
\begin{eqnarray}
\label{nunorm} \|\nu_{n}\|^2=\sum_{j=\gamma_{n}}^{n}(j-\gamma_{n})^2(n-j)^2=\frac{n^{5(1+b)/2}}{30}+o(n^{5(1+b)/2}).
\end{eqnarray}
This is the core of the necessary calculation.

\vspace{2mm}

\noindent\underline{\textit{Proof of Theorem \ref{kthzero}.}}
For each $n>0$, let us define $\phi_n=2\arcsin|f(n)|$.
To obtain a lower bound on the distance, we notice that if $\theta\in(-\phi_{M},\phi_{M})$ then by Lemma \ref{openchoice} the Blaschke product
\[
\frac{\eitheta\Phi_{M-1}(\eitheta)}{\Phi_{M-1}^*(\eitheta)}
\]
has argument in $(-\frac{\pi}{2}-\frac{1}{2}\phi_{M},\frac{\pi}{2}+\frac{1}{2}\phi_{M})$.  However, $\Phi_M^{(\beta)}$ has a zero precisely when this Blaschke product is equal to $\bar{\beta}$, which lies outside this arc for large $M$.  Finally, we notice that since $|f(n)|$ decays monotonically to $0$, we have
\[
|e^{i\phi_{M}}-1|\geq(2-\delta)|f(M)|
\]
for large $M$, which gives us the desired lower bound.

For the upper bound, we first consider the case $k=1$ and $\beta_n\equiv-1$.  By our earlier discussion, it suffices to show that if $M$ is such that the Verblunsky coefficients $\{\alpha_0,\ldots,\alpha_M,\ldots\}$ satisfy the conditions of the theorem then there exists a unit vector $v=v(M)$ such that
\[
||(\mcc^{(M)}_{-1}-1)v||<(2+\delta)|f(M)|.
\]
First let us consider the case $f(n)=-Cn^{-b}$ with $0<b<1$ and $C>0$.  We will show that we can set $v(M)=\nu_{M}/\|\nu_M\|$ as defined earlier with $\gamma_{M}=M-M^{\frac{1+b}{2}}$.  We begin by again using elementary formulas for $\sum_{j=1}^Qj^p$ to make the following calculations
\begin{eqnarray}
\label{sumsum} \sum_{j=\gamma_n+4}^{n-4}(x_{j+1}+x_{j-1})^2&=&\frac{4n^{5(1+b)/2}}{30}+o(n^{5(1+b)/2})\\
\label{diffsum} \sum_{j=\gamma_n+4}^{n-4}(x_{j}-x_{j\pm2})^2&=&O(n^{3(1+b)/2}).
\end{eqnarray}
We see from equations (\ref{aone}) and (\ref{nunorm}) that $A_1(n)/\|\nu_{n}\|^2=o(n^{-2b})$.  We can bound $A_2(n)$ (from equation (\ref{atwo})) by
\begin{eqnarray*}
A_2(n)&\leq&(1+o(1))C^2\sum_{j=\gamma_n+4}^{n-4}\frac{1}{j^{2b}}|x_{j+1}+x_{j-1}|^2\leq
\frac{(1+o(1))C^2}{n^{2b}}\sum_{j=\gamma_n+4}^{n-4}|x_{j+1}+x_{j-1}|^2\\
&=&\frac{4C^2}{30}n^{5(1+b)/2-2b}+o(n^{5(1+b)/2-2b})
\end{eqnarray*}
where we evaluated the last sum using equation (\ref{sumsum}).  Finally, we can bound $A_3(n)$ (see equation (\ref{athree})) from above by
\[
2\bigg(\sum_{{j=\gamma_n+4}\atop {j\, odd}}^{n-4}|x_{j}-x_{j+2}|^2+
\sum_{{j=\gamma_n+4}\atop {j\, even}}^{n-4}|x_{j}-x_{j-2}|^2\bigg)+
\frac{K}{n^{4b}}\bigg[\sum_{{j=\gamma_n+4}\atop {j\, odd}}^{n-4}|x_{j}+x_{j+2}|^2+
\sum_{{j=\gamma_n+4}\atop {j\, even}}^{n-4}|x_{j}+x_{j-2}|^2\bigg]
\]
for some constant $K>0$.  We can bound this further by eliminating the odd/even subscripts and then evaluate as before using equations (\ref{sumsum}) and (\ref{diffsum}) to see that
\[
A_3(n)=O(n^{5(1+b)/2-4b}+n^{3(1+b)/2}).
\]
Putting it all together, we conclude that
\[
\left(\frac{\|(\mccn_{-1}-1)\nu_{n}\|}{\|\nu_{n}\|}\right)^2\leq
\frac{4C^2}{n^{2b}}+o\left(\frac{1}{n^{2b}}\right)
\]
as $\nri$.  Therefore, if $M$ is large enough we can set $v(M)=\nu_M/\|\nu_M\|$ and get the desired conclusion.

If $f(n)$ is such that $\lim_{\nri}\sqrt{n}\,f^2(n-\sqrt{n})=\infty$ then we choose a different trial vector.
In this case we set
\[
(\upsilon_n)_j=
\begin{cases}
\sqrt{\frac{1}{\sqrt{n}}}, &\mbox{ $j\in(n-\sqrt{n},n)$}\\
0, &\mbox{ otherwise}
\end{cases}
\]
so that $\upsilon_n$ is a unit vector.  If $\alpha_n=f(n)+o(f(n))$ with $\sqrt{n}\,f^2(n-\sqrt{n})\rightarrow\infty$ then
\begin{eqnarray*}
\|(\mccn_{-1}-1)\upsilon_n\|^2&=&
\frac{1}{\sqrt{n}}\left(4\sum_{j=n-\sqrt{n}+1}^{n-1}\left(f^2(j)+o(f^2(j))\right)+O(1)\right)\\
&\leq& \left(4f^2(n-\sqrt{n})+O(n^{-1/2})\right)(1+o(1))=4f^2(n)(1+o(1))
\end{eqnarray*}
as $\nri$.  Therefore, if $M$ is large enough we can set $v(M)=\upsilon_M$ and get the desired conclusion.

This completes the proof of the $k=1$ case when $\beta_n\equiv-1$ (the statement concerning $\zeta_{M}^{(M)}$ follows by an obvious symmetry).  The conclusion for all $\beta$ in the desired range will follow from considering $k>1$ and using the fact that zeros of $\Phi_M^{(\beta)}(z)$ interlace for distinct values of $\beta$ (by Theorem 1.3 in \cite{RankOne}).

For $k>1$ we provide the details for the case $f(n)=-Cn^{-b}$ with $b<1$ and $C>0$ since the other case is a nearly identical calculation.  If we let $u_n=\nu_n/\|\nu_n\|$ then the calculation above actually shows that if a natural number $p>1$ is fixed then
\[
\|(\mccm_{(-1)}-1)u_{M-p(M^{\frac{1+b}{2}}+4)}\|\leq(2+\delta)CM^{-b}
\]
when $M$ is sufficiently large (we extend the vector $u_{M-p(M^{\frac{1+b}{2}}+4)}$ with zeros to make it a vector in $\bbC^{M}$).
Suppose we have an orthogonal collection $\{v_q\}_{q=1}^{m(M)}$ of unit vectors satisfying
\[
\|(\mccm_{(-1)}-1)v_p\|\leq(2+\delta/2)CM^{-b}
\]
and $m(M)\rightarrow\infty$ as $M\rightarrow\infty$.
Let $\lambda_M$ be the eigenvalue of $\mccm_{(-1)}$ closest to $1$ and let $\omega_M$ be the corresponding eigenvector. It follows that $\bar{\lambda}_M$ and $\bar{\omega}_M$ also form an eigenvalue-eigenvector pair.  Notice that since $\mccm_{(-1)}$ is unitary, it is a map from $\langle\omega_M,\bar{\omega}_M\rangle^\perp$ to itself.  Let $\tilde{v}_q$ be the projection of $v_q$ onto $\langle\omega_M,\bar{\omega}_M\rangle$ and let $w_q=v_q-\tilde{v}_q$.  Suppose
\[
\tilde{v}_q=a_q\omega_M+b_q\bar{\omega}_M.
\]
We calculate
\begin{eqnarray*}
\frac{\|(\mccm_{(-1)}-1)w_q\|}{\|w_q\|}&=&
\frac{\|(\mccm_{(-1)}-1)v_q-(\mccm_{(-1)}-1)\tilde{v}_q\|}{\sqrt{1-|a_q|^2-|b_q|^2}}
\leq\frac{(2+\delta/2)CM^{-b}+(|a_q|+|b_q|)|\lambda_M-1|}{\sqrt{1-|a_q|^2-|b_q|^2}}\\
&\leq&2CM^{-b}\left(\frac{(1+|a_q|+|b_q|)(1+\delta/4)}{\sqrt{1-|a_q|^2-|b_q|^2}}\right).
\end{eqnarray*}
Notice that
$1=\|\omega_M\|^2\geq\sum_{t=1}^{m(M)}|\langle\omega_M,v_t\rangle|^2$
so for any fixed $\epsilon>0$, the set
\[
X_M(\epsilon)=\{q:|a_q|^2=|\langle\omega_M,v_q\rangle|^2\leq \epsilon\}
\]
has cardinality tending to infinity as $M\rightarrow\infty$ (since $m(M)\rightarrow\infty$ as $M\rightarrow\infty$).
By similar reasoning, we have $1=\|\bar{\omega}_M\|^2\geq\sum_{t\in X_M(\epsilon)}|\langle\bar{\omega}_M,v_t\rangle|^2$ and so the set
\[
Y_M(\epsilon)=\{q\in X_M(\epsilon):|b_q|^2=|\langle\bar{\omega}_M,v_q\rangle|^2\leq \epsilon\}
\]
also has cardinality tending to infinity as $M\rightarrow\infty$.  Since $\epsilon>0$ can be chosen arbitrarily, we see that
\[
\min_{1\leq p\leq m(M)}\left\{\frac{(1+|a_p|+|b_p|)(1+\delta/4)}{\sqrt{1-|a_p|^2-|b_p|^2}}\right\}=1+\frac{\delta}{4}+o(1)
\]
so we have demonstrated the existence of an approximate eigenvector for $\mcc^{(M)}_{(-1)}$ in $\langle\omega_M,\bar{\omega}_M\rangle^\perp$.  It follows that $\mcc^{(M)}_{(-1)}$ must therefore have an eigenvector in $\langle\omega_M,\bar{\omega}_M\rangle^\perp$ with eigenvalue in the desired range.

We can repeat this procedure of projecting the vectors $\{v_p\}_{p=1}^{m(M)}$ to the span of the known eigenvectors.  By minimizing an expression of the form
\[
\frac{\|(\mccm_{(-1)}-1)w_p\|}{\|w_p\|}=
2CM^{-b}\left(\frac{(1+\delta/4)(1+\sum_{i}(|a_{p,i}|+|b_{p,i}|))}{\sqrt{1-\sum_{i}(|a_{p,i}|^2+|b_{p,i}|^2)}}\right)
\]
over all $p\leq m(M)$ we get the desired conclusion for arbitrary $k>1$ and $\beta=-1$.  The conclusion for all $\beta$ in the desired range follows by the interlacing of zeros.
\begin{flushright}
$\Box$
\end{flushright}

If we combine our result with some previously established results, we can understand some prominent features of Figure \ref{zeros}.
The picture suggests that for any fixed $k\in\bbN$,
\[
\left|\arg\left(\zeta_{k+1}^{(n)}\right)-\arg\left(\zeta_{k}^{(n)}\right)\right|=
\frac{2\pi}{n}+o\left(\frac{1}{n}\right)
\]
as $\nri$ (as was conjectured in \cite{FineI} when $f(n)=-Cn^{-b}$).  This was proven in \cite{FineIV} if we order the $\{\zeta_k^{(n)}\}_{k=1}^{n}$ counterclockwise starting from any $\eitheta\neq1$, that is, we have uniform clock spacing of the zeros away from $1$.  Our result proves only that the zero spacings closest to $1$ (but outside the large gap) are $o(n^{-b})$ as $\nri$.

\section{Extensions and Generalizations}\label{extensions}

Now we will examine ways in which we can tweak the hypotheses of Theorem \ref{kthzero} and obtain similar conclusions.  Throughout this section, we will let $k\in\bbN$, $\beta\in\partial\bbD$, and $\delta>0$ be fixed as in the statement of Theorem \ref{kthzero}.

For our first extension, we will relax the condition that $\alpha_n\in\bbR$ for all $n$.  Notice that a priori the proof of Theorem \ref{kthzero} only proves that the $k^{th}$ closest zero of $\Phi_M^{(\beta)}(z)$ to $1$ has argument approximately $\pm2|f(M)|$.  However, the reality assumption on the sequence $\{\alpha_n\}_{n\geq0}$ tells us that the zeros of $\Phi_M^{(-1)}(z)$ come in conjugate pairs so we can make a statement about $\zeta_k^{(M)}$ in particular.  We can in fact make the same conclusion by imposing a reality condition only on large blocks of the Verblunsky coefficients.  The size of these blocks will depend on the decay of the Verblunsky coefficients, as can be seen from the proof of Theorem \ref{kthzero}.

\begin{theorem}\label{nonrealextend}
Let $\{\hat{\alpha}_n\}_{n\geq0}$ be a sequence of real Verblunsky coefficients having slow decay controlled by $f$.  If $M$ and $N$ are sufficiently large and $\{\alpha_n\}_{n\geq0}$ is a sequence of Verblunsky coefficients satisfying $\alpha_n\in P_{\alpha_M}$ for all $n\leq M$ and $\alpha_n=\hat{\alpha}_n$ for all $n\in(M-N(t_M+4),M]$, then
\[
\left|\frac{\arg\left(\zeta_k^{(M)}\right)}{2|f(M)|}-1\right|<\delta.
\]
Here $t_M=M^{\frac{1+b}{2}}$ if $f(n)=-Cn^{b}$ for $C>0$ and $b\in(0,1)$ and $t_M=\sqrt{M}$ otherwise.
\end{theorem}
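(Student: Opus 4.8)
The plan is to prove Theorem \ref{nonrealextend} by combining the two halves of the argument for Theorem \ref{kthzero} and checking that each half is robust to the weakened hypotheses. The lower bound is essentially free: since $\alpha_n\in P_{\alpha_M}$ for all $n\le M$, Lemma \ref{openchoice} (applied with $\alpha=\alpha_M$, noting $\alpha_M<0$ is small once $M$ is large so $\alpha_M\in(-1/2,0)$) gives that the Blaschke product $e^{i\theta}\Phi_{M-1}(e^{i\theta})/\Phi_{M-1}^*(e^{i\theta})$ has argument confined to $(-\tfrac{\pi}{2}-\tfrac12\phi_M,\tfrac{\pi}{2}+\tfrac12\phi_M)$ for $\theta\in(-\phi_M,\phi_M)$, where $\phi_M=2\arcsin|f(M)|$ (here I use $\alpha_M=f(M)+o(f(M))$ from condition (iv) to replace $\arcsin|\alpha_M|$ by $\arcsin|f(M)|$ up to a $(1+o(1))$ factor). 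For any fixed $\beta$ in the stated range, $\bar\beta$ lies outside this arc once $M$ is large, so $\Phi_M^{(\beta)}$ has no zero in $(-\phi_M,\phi_M)$; together with the monotone decay of $|f|$ this yields $\arg(\zeta_k^{(M)})\ge(2-\delta)|f(M)|$ for the lower bound. Nothing here used reality of the $\alpha_n$ outside the block.

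For the upper bound I would reuse the trial vectors $\nu_n$ (resp.\ $\upsilon_n$) verbatim. The key observation is that these vectors are supported on coordinates $j\in(\gamma_M,M)$ with $\gamma_M=M-M^{(1+b)/2}$ (resp.\ $j\in(M-\sqrt M,M)$), so the quantity $\|(\mcc^{(M)}_{-1}-1)\nu_M\|$ depends only on the Verblunsky coefficients $\alpha_j$ with indices $j$ in a window of size $O(t_M)$ near $M$ — precisely the block on which we have imposed $\alpha_n=\hat\alpha_n$. Hence the estimates (\ref{aone})--(\ref{athree}), (\ref{nunorm}), (\ref{sumsum}), (\ref{diffsum}) go through word for word with $\hat\alpha_n$ in place of $\alpha_n$, giving $\|(\mcc^{(M)}_{-1}-1)\nu_M\|/\|\nu_M\|\le(2+\delta)CM^{-b}+o(M^{-b})$ (resp.\ the analogous bound in the other case). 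Thus $\mcc^{(M)}_{-1}$ has an eigenvalue within $(2+\delta)|f(M)|$ of $1$, i.e.\ $\Phi_M^{(-1)}$ has a zero there. To pin down $\zeta_k^{(M)}$ itself (rather than merely ``the $k$th closest zero''), I would note that the block hypothesis forces the relevant coordinates of the CMV matrix to be real, so the restriction of $\mcc^{(M)}_{-1}$ to the invariant subspace spanned by $\nu_M$-type vectors and their complex conjugates behaves as in the real case; the $m(M)\to\infty$ orthogonality-and-projection argument from the proof of Theorem \ref{kthzero} then produces $k$ distinct eigenvalues near $1$, which occur in conjugate pairs because the associated block is real, giving control of $\zeta_k^{(M)}$ exactly. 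Finally the passage to general $\beta$ follows, as before, from interlacing of zeros (Theorem 1.3 in \cite{RankOne}).

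The main obstacle, and the step deserving the most care, is justifying the claim that $\|(\mcc^{(M)}_{-1}-1)\nu_M\|$ genuinely depends only on the coefficients in the prescribed block: one must check that the nonzero rows of $(\mcc^{(M)}_{-1}-1)\nu_M$ are indexed within $O(t_M)$ of $\gamma_M$ and $M$, using the five-diagonal (in the appropriate basis) structure of the CMV matrix from Section 4.2 of \cite{OPUC1}, and that the boundary terms near $j=\gamma_M$ and $j=M$ — absorbed into $A_1(M)=O(M^{1+b})$ in the original proof — are controlled using only the size bound $|\alpha_j|=O(M^{-b})$ on that block (which follows from $\alpha_n=\hat\alpha_n$ and the decay of $f$), together with the fact that for $n\le M$ we merely need $\alpha_n\in P_{\alpha_M}$ to run the lower bound, not any size control. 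A secondary subtlety is verifying that the constant $N$ in ``$N(t_M+4)$'' is large enough for the projection argument to manufacture $k$ approximate eigenvectors: here one uses that shifting the support of $\nu$ by one ``slot'' of length $t_M+4$ changes the relevant indices by $t_M+4$, so the $p$th shifted vector is supported in $(M-(p+1)(t_M+4),M)$, and taking $N>k$ keeps all of them inside the block where $\alpha_n=\hat\alpha_n$; the mutual near-orthogonality and the error estimates are then identical to those already carried out for Theorem \ref{kthzero}.
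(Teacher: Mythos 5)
Your lower-bound argument is fine, and your observation that the variational quantity $\|(\mcc^{(M)}_{-1}-1)\nu_M\|$ depends only on the $\alpha_j$ with $j$ in an $O(t_M)$-window near $M$ (by the five-diagonal structure of the CMV matrix) is correct and relevant. But it is not where the real difficulty lies, and the step you use to get from ``the $k$-th closest eigenvalue to $1$'' to $\zeta_k^{(M)}$ itself does not work.

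The claim that ``the block hypothesis forces the restriction of $\mcc^{(M)}_{-1}$ to the invariant subspace spanned by $\nu_M$-type vectors and their conjugates to behave as in the real case'' fails on two counts. First, that span is \emph{not} an invariant subspace: $\mcc^{(M)}_{-1}$ is banded, so applying it to a vector supported in $(\gamma_M,M)$ produces nonzero entries outside the span of your trial vectors, and iterating eventually reaches indices where $\alpha_n$ is allowed to be non-real. Second, and more fundamentally, the eigenvectors of $\mcc^{(M)}_{-1}$ whose eigenvalues you are trying to locate are global objects determined by \emph{all} the Verblunsky coefficients, including the possibly non-real $\alpha_n$ with $n\le M-N(t_M+4)$; there is therefore no reason for the spectrum to be invariant under $\lambda\mapsto\bar\lambda$. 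So the variational argument by itself only yields $\ge k$ (in fact $\ge 2k$) eigenvalues within $(2+\delta)|f(M)|$ of $1$, without saying how they split between positive and negative argument. It is consistent with your estimates that all of them lie on one side of $1$, in which case you control $\zeta_1^{(M)}$ but not $\zeta_k^{(M)}$. In Theorem \ref{kthzero} this issue is resolved by global reality of $\{\alpha_n\}$; here that crutch is gone, and you have not replaced it.

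The paper's proof uses a genuinely different mechanism for the upper bound. It introduces the Pr\"ufer phases $\eta_n(\theta)=\arg\bigl(\eitheta b_{n-1}(\eitheta)\bigr)$ for the given sequence and $\tau_n(\theta)$ for the auxiliary all-real sequence $\{a_n\}$ obtained by starting at the beginning of the block, together with the recursion $\eta_n(\theta)=\eta_{n-1}(\theta)+\theta-2\arg\bigl(1-\alpha_{n-2}e^{i\eta_{n-1}(\theta)}\bigr)$. Using that $x\mapsto\arg(1-\alpha e^{ix})$ is Lipschitz with constant $<1/2$ for small real $\alpha$, the inequality $\eta>\tau$ once established at the start of the block propagates forward across the block. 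The proof of Theorem \ref{kthzero} (which for the all-real $\{a_n\}$ does have conjugate symmetry) shows $\tau$ exceeds $(2k+1)\pi$ at $(1+\delta)\thalphm$, hence so does $\eta_M$; since $\eta_M(0)$ is bounded via Lemma \ref{openchoice}, the map $z\mapsto z b_{M-1}(z)$ wraps $\{e^{ix}:0<x<(1+\delta)\thalphm\}$ around the circle at least $k$ times, producing at least $k$ zeros of $\Phi_M^{(-1)}$ with \emph{positive} argument in the desired range. That phase-counting step — the transfer of zero counts from the real auxiliary sequence to the actual one — is precisely the idea your proposal is missing.
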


\begin{proof}
We will provide the details for the case $f(n)=-Cn^{-b}$ with $b\in(0,1)$ and $C>0$.  The other case can be handled with obvious modifications.

The proof of the lower bound given in the proof of Theorem \ref{kthzero} applies here so we need only prove the upper bound.
Let $\{a_n\}_{n\geq0}$ be the sequence of Verblunsky coefficients given by
\[
a_j=\alpha_{j-2+M-(N-1)(M^{\frac{1+b}{2}}+4)}
\]
with corresponding orthonormal polynomials $p_j(z)$ and let $c_j(z)=p_j(z)/p_j^*(z)$.  Now we define $\eta_n(\theta)$ to be the phase of $\eitheta b_{n-1}(\eitheta)$ so that $\eta_n$ is strictly increasing (by equation 10.8 in \cite{FineIV}) and changes by $2\pi n$ as $\theta$ runs from $0$ to $2\pi$.  Similarly, we define $\tau_n(\theta)$ to be the phase of $\eitheta c_{n-1}(\eitheta)$ (note that $\tau_n(0)=0$ for all $n$).  It follows from the formulas in Chapter 10.12 in \cite{OPUC2} that
\begin{eqnarray}\label{pruferecur}
\eta_n(\theta)=\eta_{n-1}(\theta)+\theta-2\arg(1-\alpha_{n-2}e^{i\eta_{n-1}(\theta)})
\end{eqnarray}
and similarly for $\tau_n(\theta)$ (notice the similarity to Proposition 2.2 in \cite{KStoi}).

Notice that the condition $\alpha_n\in P_{\alpha_M}$ for all $n\leq M$ implies (by Lemma \ref{openchoice})
\begin{eqnarray*}
\eta_{M-N(M^{\frac{1+b}{2}}+4)}(\thalphm)&>&
\thalphm-\frac{\pi}{2}+\arcsin|\alpha_M|.%
\end{eqnarray*}
By using equation (\ref{kru}) and the reasoning of Section \ref{keylemma}, one finds that if $\arg(b_{j}(e^{i\thalphm}))<0$ then after at most $O(M^b)$ more iterations of the recursion using negative real Verblunsky coefficients we have $\arg(b_{k}(e^{i\thalphm}))>0$.
A similar statement holds for $-\thalphm$.  Therefore, we can say
\[
\eta_{M-(N-1)(M^{\frac{1+b}{2}}+4)}((1+\delta)\thalphm)>(1+\delta)\thalphm=\tau_1((1+\delta)\thalphm).
\]
Now, for $n>M-(N-1)(M^{\frac{1+b}{2}}+4)=:m$ we have
\[
\eta_n(\theta)-\tau_{n-m+1}(\theta)=\eta_{n-1}(\theta)-\tau_{n-m}(\theta)
-2\arg\left(\frac{1-\alpha_{n-2}e^{i\eta_{n-1}(\theta)}}{1-\alpha_{n-2}e^{i\tau_{n-m}(\theta)}}\right).
\]
If $\alpha\in\bbR$ and $|\alpha|<1/5$, then the function $f(x)=\arg(1-\alpha e^{ix})$ is Lipschitz with Lipschitz constant strictly smaller than $1/2$.  Therefore, whenever $\eta_{n-1}(\theta)-\tau_{n-m}(\theta)>0$ the above formula shows $\eta_n(\theta)-\tau_{n-m+1}(\theta)>0$ too.  Therefore, by induction we get
\[
\eta_M((1+\delta)\thalphm)>\tau_{(N-1)(M^{\frac{1+b}{2}}+4)+1}((1+\delta)\thalphm).
\]
We can apply the proof of Theorem \ref{kthzero} to show that
\[
\tau_{(N-1)(M^{\frac{1+b}{2}}+4)+1}((1+\delta)\thalphm)>(2k+1)\pi
\]
if $N$ and $M$ are sufficiently large, so the same must be true of $\eta_M((1+\delta)\thalphm)$.  Furthermore, the proof of Lemma \ref{openchoice} shows that $\eta_M(0)\in(\frac{-1}{2}(\pi-\thalphm),\frac{1}{2}(\pi-\thalphm))$.  Therefore, $zb_{M-1}(z)$ wraps the arc $\{e^{ix}:x\in(0,(1+\delta)\thalphm)\}$ around the circle at least $k$ times so there are at least $k$ zeros of $\Phi_{M}^{(-1)}(z)$ in the desired range.  The result for general $\beta$ again follows from the interlacing of zeros.
\end{proof}

\noindent\textit{Remark.}  As in Theorem \ref{kthzero}, under the hypotheses of Theorem \ref{nonrealextend} we can make a corresponding statement concerning $\zeta_{M-k+1}^{(M)}$.

\vspace{2mm}

We can also state one generalization by applying our techniques to the orthogonal polynomials.  Lemma \ref{openchoice} shows that the phase of a certain Blaschke product stays away from $\pm\pi$ for $\theta\in\gap$.  Evaluating the same Blaschke product at $z=r\eitheta$ for $\theta\in\gap$, one can show that the phase is even farther from $\pm\pi$.  Therefore, one can use the same argument to prove the following

\begin{theorem}\label{wedge}
Let $\alpha\in(-1/2,0)$ and suppose $\alpha_j\in(-1,\alpha)$ for $j=0,1,\ldots,n-2$ and $\alpha_{n-1}=\alpha$.  If $W_\alpha$ is the sector of the unit disk subtending the arc $\gap$ then $\Phi_n(z)$ has no zeros in $W_\alpha$.
\end{theorem}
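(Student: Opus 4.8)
The plan is to reduce Theorem~\ref{wedge} to a statement saying that the phase of a Blaschke product stays off the negative real axis, and then to run the induction of Lemma~\ref{openchoice} with one extra ingredient supplied by the maximum modulus principle. Since $\alpha_{n-1}=\alpha\in\bbR$, the Szeg\H{o} recursion gives $\Phi_n(z)=z\Phi_{n-1}(z)-\alpha\,\Phi_{n-1}^*(z)$; as $\Phi_{n-1}^*$ has no zeros in $\overline{\bbD}$, for $z\in\overline{\bbD}$ the equation $\Phi_n(z)=0$ is equivalent to
\[
z\,b_{n-1}(z)=\alpha,\qquad b_{n-1}(z):=\frac{\Phi_{n-1}(z)}{\Phi_{n-1}^*(z)}=\frac{\varphi_{n-1}(z)}{\varphi_{n-1}^*(z)}.
\]
Moreover $\Phi_n(0)=-\alpha\neq0$, so it suffices to prove that $z\,b_{n-1}(z)\neq\alpha$ for every $z=r\eitheta$ with $0<r<1$ and $\theta\in\gap$.

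The heart of the matter is the following interior version of Lemma~\ref{openchoice}: under the hypotheses of the theorem, for each $j=0,1,\dots,n-1$,
\[
\arg\big(b_j(r\eitheta)\big)\in\Big(-\tfrac{\pi}{2}+\arcsin|\alpha|,\ \tfrac{\pi}{2}-\arcsin|\alpha|\Big)\qquad\text{for all }0<r\le1\text{ and }\theta\in\gap.
\]
For $r=1$ this is exactly Lemma~\ref{openchoice}; the work is to extend it into the disk. The proof is by induction on $j$, using the recursion (\ref{kru}), which holds for all $z$ (not only $z\in\partial\bbD$) because $b_m=\Phi_m/\Phi_m^*$ and $\Phi_{m+1}=z\Phi_m-\overline{\alpha_m}\Phi_m^*$. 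Since all $\alpha_m$ are real, the $\Phi_m$ have real coefficients, so $b_j(\bar z)=\overline{b_j(z)}$ and it is enough to prove the upper bound $\arg b_{j+1}(r\eitheta)<\tfrac{\pi}{2}-\arcsin|\alpha|$ for all $\theta\in\gap$, the lower bound following by replacing $\theta$ with $-\theta$. The base case $b_0\equiv1$ is clear. For the step write $w=r\eitheta\,b_j(r\eitheta)$; the induction hypothesis gives $\arg w=\theta+\arg b_j(r\eitheta)\in\big(-\tfrac{\pi}{2}-\arcsin|\alpha|,\tfrac{\pi}{2}+\arcsin|\alpha|\big)$, and the new ingredient is that $|b_j|\le1$ on $\overline{\bbD}$ (maximum modulus principle, since $|\Phi_j|=|\Phi_j^*|$ on $\partial\bbD$ and $\Phi_j^*$ is zero-free there), whence $|w|\le r\le1$. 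In particular $w$ lies off the negative real axis, so $w\neq\overline{\alpha_j}$ and $b_{j+1}(r\eitheta)\neq0$. Exactly as in the proof of Lemma~\ref{openchoice}, the quantity $\arg(w-\overline{\alpha_j})-\arg(1-\alpha_j w)$, for this $w$, is maximized over $\alpha_j\in\overline{P_\alpha}$ when $\alpha_j=e^{i(\pi/2+\arcsin|\alpha|)}$, at which point the M\"obius map degenerates to the constant $e^{i(\pi/2-\arcsin|\alpha|)}$; since our $\alpha_j\in(-1,\alpha)$ lies in the interior of $P_\alpha$ this forces $\arg b_{j+1}(r\eitheta)<\tfrac{\pi}{2}-\arcsin|\alpha|$, completing the induction. (Equivalently, once $b_{j+1}$ is known to be zero-free on the closed sector, $\arg b_{j+1}$ is harmonic there and attains its maximum on the boundary; on the bounding arc one invokes Lemma~\ref{openchoice}, and on the two radial edges one uses the same elementary estimate.)

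Now apply this with $j=n-1$: for $z=r\eitheta\in W_\alpha$,
\[
\arg\big(z\,b_{n-1}(z)\big)=\theta+\arg b_{n-1}(z)\in\Big(-\tfrac{\pi}{2}-\arcsin|\alpha|,\ \tfrac{\pi}{2}+\arcsin|\alpha|\Big),
\]
using $|\theta|<\thalph=2\arcsin|\alpha|$. Because $\alpha\in(-1/2,0)$ forces $\arcsin|\alpha|<\pi/6$, this interval is contained in $(-\pi,\pi)$, so $z\,b_{n-1}(z)$ lies off the negative real axis and in particular is not equal to $\alpha<0$. By the first paragraph $\Phi_n(z)\neq0$, and since also $\Phi_n(0)=-\alpha\neq0$, we conclude that $\Phi_n$ has no zeros in $W_\alpha$.

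The step I expect to be the main obstacle is the claim used in the inductive step, that relaxing $|w|=1$ to $|w|\le1$ does not spoil the geometric maximization of Lemma~\ref{openchoice} — that is, that the extremal configuration of $\arg\big(\tfrac{w-\overline{\alpha_j}}{1-\alpha_j w}\big)$ still occurs on the unit circle, so that moving $w$ into the disk only pushes this phase \emph{further} from $\pm\pi$, which is the phenomenon alluded to in the remark preceding the theorem. This is a finite elementary computation about a single M\"obius transformation; once it is in place, everything else is either contained in Lemma~\ref{openchoice} or is the bookkeeping of adding the rotation by $\arg z=\theta$.
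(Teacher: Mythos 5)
Your proposal is correct and follows exactly the route the paper sketches in the remark preceding Theorem~\ref{wedge}: extend the phase bound of Lemma~\ref{openchoice} from the boundary arc into the interior of the sector via the Blaschke-product recursion, then conclude $z\,b_{n-1}(z)\neq\alpha$ (hence $\Phi_n(z)\neq 0$) since that product stays off the negative real axis. The step you flag as the main obstacle — that passing from $|w|=1$ to $|w|\le 1$ only pushes $\arg\bigl(\frac{w-\overline{\alpha_j}}{1-\alpha_j w}\bigr)$ farther from $\pm\pi$ — is precisely what the paper asserts "one can show," and it does hold (for real $\alpha_j$ it follows, e.g., by noting that the image under the disk automorphism of the radius at angle $\pi/2+\arcsin|\alpha|$ is a circular arc tangent at $e^{i(\pi/2-\arcsin|\alpha|)}$ to the ray at that angle and lying on the side of smaller argument), so this matches the paper's level of detail as well as its method.
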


\noindent\textit{Remark.}  Theorem \ref{wedge} is also related to Conjecture D in \cite{FineI} and the analysis in \cite{Beware}.

\section{Appendix}\label{appendix}

Here we present an alternate approach to finding the lower bound for the distance from the closest zero of $\Phi_n^{(-1)}(z)$ to $1$ under the additional hypothesis that $\{\alpha_n\}_{n\geq0}$ is a sequence of real Verblunsky coefficients increasing monotonically to $0$.  This is a weaker result than what we obtained earlier, but the proof is very different.

\begin{lemma}\label{dets}
Let $J$ be an $n\times n$ real matrix of the following form (where $(+)$ indicates a positive element and $(-)$ indicates a negative one)
\[J=
\begin{pmatrix}
  +&-&0&0&\cdots&0 \\
  -&-&+&0&\cdots&0 \\
  0&+&+&-&\cdots&0 \\
  \vdots&&\ddots&\ddots&\ddots&\vdots\\
  0&&\cdots&0&-&-
\end{pmatrix}
\]
that is, the signs alternate along the three main diagonals and the first element along the main diagonal has the opposite sign of the first element of the off-diagonals.  Then $J$ is invertible.
\end{lemma}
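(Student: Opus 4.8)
The plan is to exploit the tridiagonal structure of $J$ through the standard three-term recurrence satisfied by its leading principal minors. Write the main-diagonal entries of $J$ as $a_1,\dots,a_n$, the superdiagonal entries (in positions $(k,k+1)$) as $b_1,\dots,b_{n-1}$, and the subdiagonal entries (in positions $(k+1,k)$) as $c_1,\dots,c_{n-1}$, and let $D_k$ be the determinant of the upper-left $k\times k$ block of $J$, with the convention $D_0=1$. Expanding along the last row and column gives
\[
D_k=a_kD_{k-1}-b_{k-1}c_{k-1}D_{k-2},\qquad k\ge 2 ,
\]
and the goal is to show $D_n\neq 0$.

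First I would record the two sign facts supplied by the hypotheses. The alternation along the main diagonal, together with $a_1>0$, means $\mathrm{sgn}(a_k)=(-1)^{k+1}$. The alternation along the two off-diagonals, together with the sign of the first off-diagonal entry, means that for each $j$ the entries $b_j$ and $c_j$ carry the \emph{same} sign, so $b_jc_j>0$; note this uses only that the two off-diagonals have matching sign patterns, not that $J$ is symmetric. Hence the recurrence has the form $D_k=a_kD_{k-1}-(\text{positive})\,D_{k-2}$ with $\mathrm{sgn}(a_k)=(-1)^{k+1}$.

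Then I would prove by a two-step induction that $\mathrm{sgn}(D_k)=(-1)^{\lfloor k/2\rfloor}$ for every $k\ge 0$; in particular $D_k\neq 0$ for all $k$, so $D_n=\det J\neq 0$ and $J$ is invertible. The base cases are immediate: $D_0=1>0$ and $D_1=a_1>0$. For the inductive step, assuming the claim for $k-1$ and $k-2$, one checks that the two summands $a_kD_{k-1}$ and $-b_{k-1}c_{k-1}D_{k-2}$ in fact have the \emph{same} sign, namely $(-1)^{\lfloor k/2\rfloor}$, so they cannot cancel and $D_k$ inherits that sign; the verification splits into the cases $k$ even and $k$ odd and comes down to the elementary identity $\lfloor(k-1)/2\rfloor-\lfloor(k-2)/2\rfloor\in\{0,1\}$. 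There is no serious obstacle here: the only thing one might overlook at first is that the minors $D_k$ do not simply alternate in sign but cycle through the period-four pattern $+,+,-,-$, and once that is anticipated the induction is routine sign-bookkeeping.
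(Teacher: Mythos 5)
Your proof is correct and rests on the same underlying idea as the paper's: track the sign of principal-minor determinants of the tridiagonal matrix via the three-term recurrence, using that the off-diagonal products $b_jc_j$ are positive and the diagonal signs alternate. The only difference is packaging — the paper applies the recurrence twice at once to write $\det J$ in terms of the submatrices with the first two and first three rows/columns removed, which makes both coefficients manifestly negative, while you use the standard one-step recurrence $D_k=a_kD_{k-1}-b_{k-1}c_{k-1}D_{k-2}$ and verify directly the period-four sign pattern $\mathrm{sgn}(D_k)=(-1)^{\lfloor k/2\rfloor}$; both implementations are sound.
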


\begin{proof}
We proceed by induction to prove that if $n=2m$ or $2m+1$ and $J_{1,1}>0$ then if $m$ is even the determinant of $J$ has the same sign as its $(1,1)$ element, while if $m$ is odd then then the determinant of $J$ has opposite sign as its $(1,1)$ element.  In either case, this shows the determinant is non-zero so $J$ is invertible.

The claim is easily verified for $m=1,2$ so the base case is trivial.  If $S,T\subseteq\{1,2,3,\ldots,n\}$, then we denote by $J_{(S)}^{(T)}$ the minor of $J$ with the rows numbered by elements of $S$ removed and the columns numbered by elements of $T$ removed.  With this notation, we have
\[
\det(J)=(-)\det\left(J_{\{1,2\}}^{\{1,2\}}\right)+(-)\det\left(J_{\{1,2,3\}}^{\{1,2,3\}}\right).
\]
Applying the induction hypothesis to these two smaller matrices gives the desired result.
\end{proof}

\begin{lemma}\label{cdetbound}
If $\{\alpha_n\}_{n\geq0}$ is a sequence of real Verblunsky coefficients increasing monotonically to $0$ then $||(\mccn_{-1}-1)^{-1}||<\frac{1}{2|\alpha_{n-1}|}$.
\end{lemma}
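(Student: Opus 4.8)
The plan is to use the factorization of the CMV matrix to replace $\mccn_{-1}-1$ by a tridiagonal matrix to which Lemma~\ref{dets} applies. Since the $\alpha_j$ are real, the CMV matrix $\mcc$ is a \emph{real} orthogonal matrix, and its factorization $\mcc=\mcl\mcm$ from Section~4.2 of \cite{OPUC1} has $\mcl$ and $\mcm$ real, symmetric, and involutive: each is a direct sum of scalar blocks $[\pm1]$ together with the reflections $\Theta(\alpha_j)$ (with diagonal entries $\alpha_j,-\alpha_j$ and both off-diagonal entries equal to $\rho_j:=\sqrt{1-\alpha_j^2}$), and $\Theta(\alpha_j)^2=I$. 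Setting $\alpha_{n-1}=-1$ replaces the corresponding $2\times 2$ block by $\mathrm{diag}(-1,1)$ and decouples the upper-left $n\times n$ corner, so that $\mccn_{-1}=\mcln\mcmn$ where $\mcln,\mcmn$ are the $n\times n$ truncations of $\mcl,\mcm$, and these are still real, orthogonal, and involutive. Hence $\mccn_{-1}-1=\mcln\mcmn-\mcln\mcln=\mcln(\mcmn-\mcln)$, and because $\mcln$ is orthogonal,
\[
\|(\mccn_{-1}-1)^{-1}\|=\|(\mcmn-\mcln)^{-1}\|=\frac{1}{\min_j|\lambda_j|},
\]
where the $\lambda_j$ are the (real) eigenvalues of the real symmetric matrix $\mcmn-\mcln$. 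Thus it suffices to show that $\mcmn-\mcln$ has no eigenvalue in $[-2|\alpha_{n-1}|,2|\alpha_{n-1}|]$.

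Next I would work out the structure of $\mcmn-\mcln$. Because the $2\times 2$ blocks of $\mcl$ and $\mcm$ are staggered, this matrix is tridiagonal, and computing the entries gives diagonal $1-\alpha_0,\ \alpha_0+\alpha_1,\ -(\alpha_1+\alpha_2),\ \alpha_2+\alpha_3,\dots$ and off-diagonal $-\rho_0,\ \rho_1,\ -\rho_2,\ \rho_3,\dots$, with final diagonal entry $\pm(1+|\alpha_{n-2}|)$, the sign coming from the $[-1]$ block produced by $\alpha_{n-1}=-1$ (whether that block lands in $\mcln$ or $\mcmn$ depends on the parity of $n$, which is exactly why Lemma~\ref{dets} is stated for both parities). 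Since every $\alpha_j<0$ and every $\rho_j>0$, the signs along all three diagonals alternate, and the first diagonal entry $1-\alpha_0>0$ is opposite in sign to the first off-diagonal entry $-\rho_0<0$; that is, $\mcmn-\mcln$ has the form of the matrix $J$ in Lemma~\ref{dets}. The point is that subtracting $wI$ with $|w|\le 2|\alpha_{n-1}|$ cannot disturb this sign pattern: the off-diagonal entries are unchanged, while for the diagonal, monotonicity of $\{\alpha_j\}$ gives $\alpha_j+\alpha_{j+1}<2\alpha_{n-1}=-2|\alpha_{n-1}|$ for all $j\le n-2$, so every interior diagonal entry has absolute value strictly exceeding $2|\alpha_{n-1}|$; and the two end entries, of absolute value $1+|\alpha_0|$ and $1+|\alpha_{n-2}|$, also exceed $2|\alpha_{n-1}|$ because $|\alpha_j|<1$ and $|\alpha_{n-1}|\le|\alpha_j|$ for all $j\le n-2$. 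Hence $\mcmn-\mcln-wI$ again has the form $J$, so Lemma~\ref{dets} shows it is invertible for every $w$ in the closed interval. Therefore $\mcmn-\mcln$ has no eigenvalue in $[-2|\alpha_{n-1}|,2|\alpha_{n-1}|]$, whence $\min_j|\lambda_j|>2|\alpha_{n-1}|$ and $\|(\mcmn-\mcln)^{-1}\|<\frac{1}{2|\alpha_{n-1}|}$, which is the claim.

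The main obstacle I anticipate is the bookkeeping at the two ends of $\mcmn-\mcln$: one must pin down exactly how the $[1]$ block at the top of $\mcm$ and the $[-1]$ block produced by $\alpha_{n-1}=-1$ enter the truncations, and verify in both parity cases that the first and last diagonal entries carry the signs demanded by Lemma~\ref{dets}. The interior of the argument is robust --- all the inequalities used there are far from tight --- so the only delicate points are these corners, which the two-parity phrasing of Lemma~\ref{dets} is designed to absorb.
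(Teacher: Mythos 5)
Your proof is correct and follows essentially the same route as the paper's: factor $\mccn_{-1}=\mcln\mcmn_{-1}$ using (4.2.18) of \cite{OPUC1}, use unitarity and symmetry of the $\mcl$ (or $\mcm$) factor to reduce $\|(\mccn_{-1}-1)^{-1}\|$ to the reciprocal of the smallest eigenvalue of the real symmetric tridiagonal matrix $\mcmn_{-1}-\mcln$, and then invoke Lemma~\ref{dets} to show that subtracting any $w$ with $|w|\le 2|\alpha_{n-1}|$ from the diagonal leaves the sign pattern intact, hence keeps the matrix invertible. Your write-up is somewhat more explicit than the paper's in spelling out the alternating sign pattern, checking the corner entries in both parity cases, and verifying via monotonicity that the subtraction of $w$ actually preserves the signs (and, by using the closed interval $|w|\le 2|\alpha_{n-1}|$, you cleanly secure the strict inequality in the conclusion, a point the paper leaves implicit by restricting to the open interval).
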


\begin{proof}
We give the proof for the case $n$ is even, the other case being nearly identical.  Using the factorization (4.2.18) in \cite{OPUC1}, we have
\[
\mccn_{-1}=\mcln\mcmn_{-1}.
\]
Therefore,
\[
||(\mccn_{-1}-1)^{-1}||=||(\mcln\mcmn_{-1}-1)^{-1}||
=||(\mcmn_{-1}-\mcln)^{-1}\mcln||
=||(\mcmn_{-1}-\mcln)^{-1}||
\]
since when $n$ is even, $\mcln$ is unitary (if $n$ is odd then $\mcmn$ is unitary).  Since all $\alpha_n\in\bbR$, the matrix $\mcmn_{-1}-\mcln$ is self-adjoint so this last quantity is equal to the reciprocal of the absolute value of the smallest eigenvalue of $\mcmn_{-1}-\mcln$.  However, if $\lambda\in(-2|\alpha_{n-1}|,2|\alpha_{n-1}|)$ then $\mcmn_{-1}-\mcln-\lambda$ is of the form given in Lemma $\ref{dets}$ and so is invertible and the Lemma follows.
\end{proof}

Now that we have a resolvent bound, it is a simple matter to derive the lower bound for $|\zeta_1^{\upp}-1|$.

\vspace{7mm}

\end{document}